\newtheorem{Def}{Definition}[section]
\newtheorem{Thm}[Def]{Theorem}
\newtheorem{Prop}[Def]{Proposition}
\newtheorem{Cor}[Def]{Corollary}
\newtheorem{Ex}[Def]{Example}
\newtheorem{Lem}[Def]{Lemma}
\newtheorem{Conj}[Def]{Conjecture}
\newtheorem{Rem}[Def]{Remark}
\newcommand{\C}{\mathbb{C}}
\newcommand{\R}{\mathbb{R}}
\newcommand{\HH}{\mathbb{H}}
\newcommand{\Z}{\mathbb{Z}}
\newcommand{\N}{\mathbb{N}}
\newcommand{\DD}{\mathcal{D}}
\newcommand{\PP}{\mathcal{P}}
\newcommand{\RR}{\mathcal{R}}
\newcommand{\ZZ}{\mathcal{Z}}
\newcommand{\Stab}{\mathrm{Stab}}
\newcommand{\NS}{\mathrm{NS}}
\newcommand{\Hom}{\mathrm{Hom}}
\newcommand{\GL}{\mathrm{GL}}
\newcommand{\Aut}{\mathrm{Aut}}
\newcommand{\ch}{\mathrm{ch}}
\newcommand{\rank}{\mathrm{rank}}
\newcommand{\tr}{\mathrm{tr}}
\newcommand{\tf}{\mathrm{tf}}
\newcommand{\NN}{\mathcal{N}}
\newcommand{\WP}{\mathrm{WP}}
\newcommand{\Ber}{\mathrm{Ber}}
\newcommand{\cpx}{\mathrm{cpx}}
\newcommand{\Muk}{\mathrm{Muk}}
\newcommand{\Td}{\mathrm{Td}}
\newcommand{\CY}{\mathrm{CY}}
\begin{document}
\title[Weil--Petersson geometry on Stab]{Weil--Petersson geometry on the space of Bridgeland stability conditions}
\author{Yu-Wei Fan \ \ \ Atsushi Kanazawa \ \ \ Shing-Tung Yau}
\date{}

\maketitle

\begin{abstract}
We investigate some differential geometric aspects of the space of Bridgeland stability conditions on a Calabi--Yau triangulated category 
The aim is to give a provisional definition of Weil--Petersson geometry on the stringy K\"ahler moduli space. 
We study in detail a few basic examples to support our proposal.  
In particular, we identify our Weil--Petersson metric with the Bergman metric on a Siegel modular variety in the case of the self-product of an elliptic curve. 
\end{abstract}


\section{Introduction}
The present article investigates some differential geometric aspects of the space of Bridgeland stability conditions on a Calabi--Yau triangulated category.  

The motivation of our study comes from mirror symmetry. 
It is classically known that the complex moduli space $\mathcal{M}_{\mathrm{cpx}}(Y)$ of a projective Calabi--Yau manifold $Y$ comes equipped with a canonical K\"ahler metric, called the Weil--Petersson metric. 
The existence of such a natural metric often implies strong results that one cannot obtain by purely algebraic methods. 
In the case of Calabi--Yau threefold, this metric provides a fundamental differential geometric tool, known as the special K\"ahler geometry, to study mirror symmetry. 
In light of duality between complex geometry and K\"ahler geometry of a mirror pair of Calabi--Yau manifolds, a natural problem is to construct the mirror object of the Weil--Petersson geometry,
which should be defined on the stringy K\"ahler moduli space $\mathcal{M}_{\mathrm{Kah}}(X)$ of a mirror Calabi--Yau manifold $X$ of $Y$. 
However, there is yet no mathematical definition of $\mathcal{M}_{\mathrm{Kah}}(X)$ at present. 

In the celebrated work \cite{Bri}, Bridgeland introduced and studied stability conditions on a triangulated category with the hope of rigorously defining $\mathcal{M}_{\mathrm{Kah}}(X)$.
He conjectured and confirmed in several important cases \cite{BB,Bri,Bri2, Bri3} that the string theorists' stringy K\"ahler moduli space $\mathcal{M}_{\mathrm{Kah}}(X)$ admits an embedding into the double quotient
$$
\mathrm{Aut}(\DD_X)\backslash\Stab(\DD_X)/\C
$$
of the space $\Stab(\DD_X)$ of Bridgeland stability conditions on the bounded derived category $\DD_X=\mathrm{D^bCoh}(X)$ of coherent sheaves on $X$. 

The purpose of this article is to provide a step toward differential geometric study of $\mathcal{M}_{\mathrm{Kah}}(X)$ via the space of Bridgeland stability conditions. 
Although we are mostly interested in geometric situations, we will use a more general categorical language in this article.   
On careful comparison of the two sides of Kontsevich's homological mirror symmetry $\mathrm{D^bCoh}(X) \cong \mathrm{D^bFuk}(Y)$, 
we will give a provisional definition of Weil--Petersson geometry and propose a conjecture which refine a previously known one.  
We will also provide some supporting evidence by computing basic geometric examples. 
A key example is the following, where our Weil--Petersson metric coincides with the classical Bergman metric on a Siegel modular variety (notations will be explained later).

\begin{Thm}[Theorem \ref{Stab-Siegel}, Corollary \ref{WP=Bergman}]
Let $A$ be the self-product $E_\tau \times E_\tau$ of an elliptic curve $E_\tau$. 
Then there is an identification 
$$
\overline{\Aut}_\CY(\DD_A)\backslash\Stab^+_\NN(\DD_A)/\C^\times\cong
\mathrm{Sp}(4,\Z) \backslash \mathfrak{H}_2.
$$
Moreover, the Weil--Petersson metric on the stringy K\"ahler moduli space $\overline{\Aut}_\CY(\DD_A)\backslash\Stab^+_\NN(\DD_A)/\C^\times$ is identified with
the Bergman metric on the  Siegel modular variety $\mathrm{Sp}(4,\Z) \backslash \mathfrak{H}_2$. 
\end{Thm}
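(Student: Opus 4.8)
The plan is to reduce both the orbit-space identification and the metric comparison to the geometry of a single type IV period domain, and then to invoke the exceptional isomorphism $\mathrm{Spin}(2,3)\cong\mathrm{Sp}(4,\R)$. As a first step I would make the lattice $\NN(\DD_A)$ explicit: it is the algebraic Mukai lattice $\Z\oplus\NS(A)\oplus\Z$ with the Mukai pairing $\langle(r,c,s),(r',c',s')\rangle=c\cdot c'-rs'-r's$. For $A=E_\tau\times E_\tau$ with $\tau$ without complex multiplication, $\NS(A)$ has rank $3$ and signature $(1,2)$ (Hodge index theorem applied to the three generators $E_\tau\times\{0\}$, $\{0\}\times E_\tau$ and the diagonal), so the hyperbolic summand coming from $(r,s)$ raises this to an even lattice of rank $5$ and signature $(2,3)$. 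Encoding a numerical stability condition by its central-charge vector $\Omega\in\NN(\DD_A)\otimes\C$ via $Z(-)=\langle\Omega,-\rangle$, the $\C^\times$-quotient sends $\Stab^+_\NN(\DD_A)/\C^\times$ into the projective quadric, and I would show that its image is exactly one connected component $\PP^+$ of the type IV domain
\[
\PP_\NN=\{[\Omega]\in\mathbb{P}(\NN(\DD_A)\otimes\C)\ :\ \langle\Omega,\Omega\rangle=0,\ \langle\Omega,\bar\Omega\rangle>0\},
\]
which has $\dim_\C=3$. This is the abelian-surface analogue of Bridgeland's theorem for K3 categories, and it is in fact cleaner here: since an abelian surface carries no spherical objects, no walls need to be removed from $\PP^+$, so the central-charge map is a covering onto the full component and every $[\Omega]\in\PP^+$ is realized by a genuine stability condition.

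Next I would identify the group. By Orlov's description of $\Aut(\DD_A)$ and its induced action on the Mukai lattice, the Calabi--Yau autoequivalences act through the isometry group of $\NN(\DD_A)$, and after dividing by those autoequivalences acting trivially on $\PP^+$ (shifts together with the subgroup generating the $\C$-action), the image $\overline{\Aut}_\CY(\DD_A)$ is an arithmetic subgroup of the component-preserving orthogonal group $\mathrm{SO}^+(2,3)(\Z)$. The exceptional isomorphism then carries $\PP^+$ to the Siegel upper half space $\mathfrak{H}_2$ and identifies $\mathrm{SO}^+(2,3)(\Z)$ with $\mathrm{PSp}(4,\Z)=\mathrm{Sp}(4,\Z)/\{\pm\mathrm{Id}\}$, which yields the same quotient of $\mathfrak{H}_2$ as the $\mathrm{Sp}(4,\Z)$ in the statement since $-\mathrm{Id}$ acts trivially. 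Matching $\overline{\Aut}_\CY(\DD_A)$ with the \emph{full} arithmetic group, rather than a proper congruence subgroup, is the delicate point, and it is where I expect the main obstacle to lie: concretely it amounts to checking that the lattice actions of the standard generators coming from autoequivalences (Fourier--Mukai transforms for the two factors, tensoring by classes in $\NS(A)$, and the homological translations) generate all of $\mathrm{Sp}(4,\Z)$.

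Finally, for the metric comparison I would compute the provisional Weil--Petersson metric on $\PP^+$ directly from its definition. Mirroring the Hodge-theoretic Weil--Petersson potential $-\log\langle\Omega,\bar\Omega\rangle$ on $\MM_\cpx$, the central-charge vector $\Omega$ plays the role of the holomorphic volume form, so the Weil--Petersson Kähler form is $-\partial\bar\partial\log\langle\Omega,\bar\Omega\rangle$. This is manifestly invariant under $\mathrm{SO}^+(2,3)$ and hence descends to the quotient. Since $\PP^+\cong\mathfrak{H}_2$ is an irreducible Hermitian symmetric domain, any invariant Kähler metric is a positive constant multiple of the Bergman metric, so it suffices to compare the two at a single base point: I would pull both potentials back to $\mathfrak{H}_2$ under the exceptional isomorphism and evaluate at the point corresponding to $i\cdot\mathrm{Id}\in\mathfrak{H}_2$, where the Bergman kernel is classically known, and then match normalizations. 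With the group surjectivity granted, the period-domain identification and the metric identity are formal consequences of the type IV picture; the integral surjectivity onto $\mathrm{Sp}(4,\Z)$ is the only genuinely hard input.
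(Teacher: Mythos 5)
Your proposal is correct in substance, and its skeleton --- encoding central charges as vectors on the quadric in the period domain of the rank-$5$ lattice $U^{\oplus 2}\oplus\langle 2\rangle$ (your signature $(2,3)$ is the paper's $(3,2)$ in the opposite sign convention), invoking Bridgeland's covering theorem for abelian surfaces, and transporting everything through the type $\mathrm{IV}_3\cong\mathrm{III}_2$ tube-domain identification --- is the same as the paper's. Two genuine differences. First, the step you flag as the main obstacle, that $\overline{\Aut}_\CY(\DD_A)$ hits the \emph{full} arithmetic group, is not handled in the paper by a generator computation: surjectivity of $\Aut(\DD_A)\rightarrow\Aut^+H^*(A;\Z)$ is part of the quoted Theorem \ref{BriK3}(4) (resting on Orlov's description of autoequivalences of abelian varieties), and the identification of $O^+(U^{\oplus 2}\oplus\langle 2\rangle)$ with the standard $\mathrm{Sp}(4,\Z)$-action on $\mathfrak{H}_2$ is Gritsenko--Nikulin \cite[Lemma 1.1]{GN}; note this integral statement is specific to the lattice $U^{\oplus 2}\oplus\langle 2\rangle$, not to the signature alone (other lattices of signature $(3,2)$ yield paramodular groups), so your blanket $\mathrm{SO}^+(2,3)(\Z)$ must be anchored to this lattice. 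Second, for the metric the paper does not use your soft argument (invariance plus irreducibility of the Hermitian symmetric domain, then one-point normalization); it proves the exact potential identity: Lemma \ref{b-mho} --- the inverse-Euler-matrix identity you silently use when replacing the basis-dependent form $\mathfrak{b}$ by the Mukai pairing, and which deserves a line --- gives $K_{\WP}(\omega)=-\log(\Im(\omega)^2)-\log 2$, and since $\Im(\omega)^2=\det\Im(M_\omega)$ under $\omega\mapsto M_\omega$ (Proposition \ref{identify}), this equals $K_{\Ber}(M_\omega)+\log 2$. Your route is valid, but the theorem asserts equality of metrics, not proportionality, so pinning the constant to $1$ forces you into essentially the same tube-domain computation; the symmetric-space argument therefore buys little here, though it does explain \emph{a priori} why the answer had to be a multiple of the Bergman metric.
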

This result is compatible with the mirror duality between $A$ and the principally polarized abelian surface. 
In fact, the complex moduli space of the latter is given by $\mathrm{Sp}(4,\Z) \backslash \mathfrak{H}_2$. 
Another example is a quintic threefold $X \subset \mathbb{P}^4$.  
Assuming that a conjectural Bridgeland stability condition exists, 
we will observe that the Weil--Petersson metric is given by a quantum deformation of the Poincar\'e metric near the large volume limit. 

It is worth noting that some aspects of the ideas in this article were presented in a series of Wilson's works \cite{TW,Wil,Wil2} on metrics on the complexified K\"ahler cones.  
In fact, his works are also motivated by mirror symmetry since the complexified K\"ahler cone is expected to give a local chart of $\mathcal{M}_{\mathrm{Kah}}(X)$ near a large volume limit. 
For example, in the case of a Calabi--Yau 3-fold $X$, the curvature of the so-called asymptotic Weil--Petersson metric 
was shown to be closely related to the trilinear form on $H^2(X,\Z)$ \cite{TW, Wil2}. 

On the other hand, an advantage of our approach taken in this article is the fact that our Weil--Petersson metric is global and makes perfect sense away from large volume limits, in contrast to Wilson's local study.  
As a matter of fact, the global aspects of the moduli space are of special importance in recent study of mirror symmetry and string theory. 

However, the real difficulty of the subject is to understand the precise relation between the stringy K\"ahler moduli space $\mathcal{M}_{\mathrm{Kah}}(X)$ and the space of Bridgeland stability conditions. 
Nevertheless, as an application of our work, we find a new condition on the conjectural embedding of $\mathcal{M}_{\mathrm{Kah}}(X)$ into a quotient of the space of Bridgeland stability conditions, 
namely the pullback of the Weil--Petersson metric on $\mathcal{M}_{\mathrm{Kah}}(X)$ should be non-degenerate (Conjecture \ref{Conj}). 
This simply means that the mirror identification $\mathcal{M}_{\mathrm{Kah}}(X) \cong \mathcal{M}_{\mathrm{cpx}}(Y)$ respects the Weil--Petersson geometries.  

\subsection*{Structure of article}
In Section \ref{Bridgeland}, we provide basic backgrounds on the Bridgeland stability conditions and twisted Mukai pairings.  
In Section \ref{WP}, after a brief review and reformulation of the classical Weil--Petersson geometry, we translate it in the context of Bridgeland stability conditions on a Calabi--Yau triangulated category. 
Section \ref{Ex} is the heart of the article. We carry out detailed calculations for two classes of abelian surfaces to justify our proposal. 
Lastly, we discuss the case of a quintic 3-fold, and comment on further research directions. 

\subsection*{Notation and conventions}
For an abelian group $A$, we denote by $A_\tf$ the quotient of $A$ by its torsion subgroup. 
We write $A_{K}:=A\otimes_\Z K$ for a field $K$. 
$\ch(-)$ denotes the Chern character and $\Td_X$ denotes the Todd class of $X$. 
We write $\Re(z)$ and $\Im(z)$ for the real and imaginary parts of $z$ respectively. 
A Calabi--Yau manifold is a complex manifold whose canonical bundle is trivial. 
Throughout the article, we work over complex numbers $\C$. 

\subsection*{Acknowledgement}
We are grateful to Shinobu Hosono, Yukinobu Toda and Jie Zhou for useful discussions. 
Special thanks go to Hiroshi Iritani for drawing our attention to the Gamma class.   
The present work was initiated when A. K. was supported by Harvard CMSA. 
It was supported in part by the Kyoto Hakubi Project and JSPS Grant-in-Aid Wakate(B)17K17817. 
Part of the work was done when Y.-W. F. was visiting Kavli IPMU and Kyoto University.
The work of S.-T. Y. was supported by the Simons Collaboration Grant on Homological Mirror Symmetry 385581, NSF grant DMS-1607871, and Harvard CMSA.


\section{Bridgeland stability conditions} \label{Bridgeland}

\subsection{Bridgeland stability conditions} \label{Stab}
The notion of stability conditions on a triangulated category $\DD$ was introduced by Bridgeland \cite{Bri}, 
following physical ideas of $\Pi$-stabilities of D-branes due to Douglas \cite{Dou}. 
Throughout this article, we assume that $\DD$ is essentially small, linear over the complex numbers $\C$, and of finite type, i.e. 
for every pair of objects $E$ and $F$, the $\C$-vector space $\oplus_{i}\Hom_{\DD}(E,F[i])$ is finite-dimensional.
Then we define the Euler form $\chi$ on the Grothendieck group $K(\DD)$ by the formula
$$
\chi(E,F):=\sum_{i}(-1)^i \dim_\C \Hom_{\DD}(E,F[i]). 
$$
The numerical Grothendieck group $\NN(\DD):=K(\DD)/K(\DD)^{\perp_{\chi}}$ is defined to be 
the quotient of $K(\DD)$ by the null space $K(\DD)^{\perp_{\chi}}$ of $\chi$. 
We also assume that $\DD$ is numerically finite, i.e. $\NN(\DD)$ is of finite rank. 
A large class of examples of such a triangulated category is provided by the bounded derived category of coherent sheaves $\DD_X=\mathrm{D^bCoh}(X)$ of a smooth projective variety $X$.  

\begin{Def}[\cite{Bri}]
A numerical stability condition $\sigma=(\ZZ,\PP)$ on a triangulated category $\DD$ consists of a group homomorphism $\ZZ:\NN(\DD)\rightarrow\C$ (central charge) 
and a collection of full additive subcategories $\PP=\{\PP(\phi)\}_{\phi \in \R}$ of $\DD$ (semistable objects) such that: 
\begin{enumerate}
\item If $0\neq E \in \PP(\phi)$, then $\ZZ(E)\in\R_{>0}\cdot e^{\sqrt{-1} \pi \phi}$.
\item $\PP(\phi+1)=\PP(\phi)[1]$. 
\item If $\phi_1>\phi_2$ and $A_i \in \PP(\phi_i)$, then $\Hom_{\DD}(A_1,A_2)=0$. 
\item For every $0 \ne E \in \mathcal{D}$, there exists a collection of exact triangles
$$
\xymatrix{
0=E_0 \ar[r] & E_1\ar[d]  \ar[r] & E_2 \ar[r] \ar[d]& \cdots \ar[r]& E_{k-1}\ar[r] & E \ar[d]\\
                    & A_1 \ar@{-->}[lu]& A_2 \ar@{-->}[lu] &  & & A_k  \ar@{-->}[lu] 
}
$$
such that $A_i \in \PP(\phi_i)$ and $\phi_1>\phi_2>\cdots>\phi_k$.
\item (Support property \cite{KS}) There exist a constant $C>0$ and a norm $||\ ||$
on $\NN(\DD)_\R$ such that $||E||\leq C|\ZZ(E)|$ for any semistable object $E$.
\end{enumerate}
\end{Def}


We denote by $\Stab_{\NN}(\DD)$ the space of numerical stability conditions on $\DD$. 
Bridgeland defined a nice topology on it such that the forgetful map
$$
\Stab_{\NN}(\DD)\longrightarrow \Hom(\NN(\DD),\C), \ \ \ \sigma=(\ZZ,\PP) \mapsto \ZZ
$$
is a local homeomorphism  \cite{Bri, KS}, i.e. deformations of the central charge lift uniquely to deformations of the stability condition. 
Thereby $\Stab_{\NN}(\DD)$ is naturally a complex manifold, which is locally modelled on the $\C$-vector space $\Hom(\NN(\DD),\C)$. 
Moreover, $\Stab_{\NN}(\DD)$ naturally carries a right action of the group $\widetilde{\GL^+(2,\R)}$, 
the universal cover of the group of orientation-preserving automorphism $\GL^+(2,\R)$ of the Euclidean plane $\R^2$,  
as well as a left action of the group $\Aut(\DD)$ of autoequivalences of $\DD$. 
The $\widetilde{\GL^+(2,\R)}$-action is given by post-composition on the central charge $\ZZ:\NN(\DD)\rightarrow \C \cong \R^2$ and a suitable relabelling of the phases. 
We often restrict this action to the subgroup $\C \subset\widetilde{ \GL^+(2,\R)}$ which acts freely.

\begin{Rem}\label{StabFuk}
Let $\mathrm{D^bFuk}(Y)$ be the derived Fukaya category of a Calabi--Yau manifold $Y$. 
We fix a holomorphic volume form $\Omega$ of $Y$. 
It is a folklore conjecture (c.f. \cite{Bri3}) that there exists a Bridgeland stability condition on
$\mathrm{D^bFuk}(Y)$ with central charge given by the period integral
$$
\ZZ(L)=\int_L\Omega. 
$$
Moreover, the special Lagrangian submanifolds of phase $\phi$ are the semistable objects of phase $\phi$ with respect to this stability condition. 
\end{Rem}

\subsection{Central charge via twisted Mukai pairing}\label{phyexp}
Let $X$ be a smooth projective variety. 
Motivated by work of Mukai in the case of K3 surfaces, C\u{a}ld\u{a}raru defined the the Mukai pairing on $H^*(X;\C)$ as follows \cite{Cal}: for $v,v'\in H^*(X;\C)$, 
$$
\langle v,v'\rangle_{\Muk}:=\int_X e^{c_1(X)/2}\cdot v^{\vee}\cdot v'.
$$
Here $v=\sum_j v_j\in\oplus_j H^j(X;\C)$ and its Mukai dual $v^{\vee}=\sum_j\sqrt{-1}^jv_j\in H^*(X;\C)$. 
Note that the above Mukai paring differs from Mukai's original one \cite{Muk} for K3 surfaces by a sign. 
We define a twisted Mukai vector of $E\in\DD_X=\mathrm{D^bCoh}(X)$ by 
$$
v_\Lambda(E):=\ch(E)\sqrt{\Td_X} \exp(\sqrt{-1}\Lambda)
$$
for any $\Lambda \in H^*(X;\C)$ such that $\Lambda^\vee=-\Lambda$. 
The usual Mukai vector is the special case where $\Lambda=0$. 
A twisted Mukai pairing is compatible with the Euler pairing; 
by the Hirzebruch--Riemann--Roch theorem, 
$$
\chi(E,F)=\int_X \ch(E^\vee) \ch(F)\Td_X= \langle v_\Lambda(E),v_\Lambda(F)\rangle_{\Muk}. 
$$
A geometric twisting $\Lambda$ compatible with the integral structure on the quantum cohomology was introduced by Iritani \cite{Iri} and Katzarkov--Kontsevich--Pantev \cite{KKP}. 
We shall use a reformulation due to Halverson--Jockers--Lapan--Morrison \cite{HJLM} in the following. 
First let us recall a familiar identity from complex analysis
$$
\frac{z}{1-e^{-z}}=e^{z/2}\frac{z/2}{\sinh(z/2)}=e^{z/2}\Gamma(1+\frac{z}{2 \pi \sqrt{-1}})\Gamma(1-\frac{z}{2\pi \sqrt{-1}}),  
$$
where $\Gamma(z)$ is the Gamma function. 
The power series in the LHS induces the Todd class $\Td_X$.
We then consider a square root of the Todd class by writing 
$$
\sqrt{\frac{z}{1-z}}\exp(\sqrt{-1}\Lambda(z))=e^{z/4}\Gamma(1+\frac{z}{2 \pi \sqrt{-1}}),
$$
and solve it for $\Lambda(z)$, where $z$ is real, as 
\begin{align}
\Lambda(z)&=\Im (\log\Gamma(1+\frac{z}{2 \pi \sqrt{-1}})) \notag \\
&=\frac{\gamma z}{2\pi}+\sum_{j\ge 1}(-1)^j \frac{\zeta(2j+1)}{2j+1}\left(\frac{z}{2\pi}\right)^{2j} \notag
\end{align}
where $\gamma$ is Euler's constant.
Since the constant term of $\Lambda(z)$ is zero, we may use it to define an additive characteristic class $\Lambda_X$, called the log Gamma class. 
Note that $\Lambda_X^\vee=-\Lambda_X$ as only odd powers of $z$ appear in $\Lambda(z)$. 
In the Calabi--Yau case, we can explicitly write it as
$$
\Lambda_X=-\frac{\zeta(3)}{(2 \pi)^3}c_3(X)+\frac{\zeta(5)}{(2\pi)^5}(c_5(X)-c_2(X)c_3(X))+\dots
$$
For K3 and abelian surfaces, there is no effect of twisting as $\Lambda_X=0$. 
For Calabi--Yau 3-folds, the modification is precisely given by the first term, which is familiar in period computations in the B-model side. 
We define $v_X(E)$ to be the twisted Muaki vector of an object $E \in \DD_X$ associated to the log Gamma class $\Lambda_X$, i.e. 
$$
v_X(E):=\ch(E)\sqrt{\Td_X} \exp(\sqrt{-1}\Lambda_X)
$$

Let $X$ be a projective Calabi--Yau manifold equipped with a complexified K\"ahler parameter 
$$
\omega=B+\sqrt{-1} \kappa \in H^2(X;\C),
$$   
where $\kappa$ is a K\"ahler class. Let also $q=\exp(2 \pi \sqrt{-1}\omega)$. 
We define $\exp_*( \omega)$ by 
$$
\exp_*( \omega):=1+ \omega+\frac{1}{2!}\omega*\omega+\frac{1}{3!}\omega*\omega*\omega+\cdots. 
$$
where $*$ denotes the quantum product. 
It is conjectured (c.f. \cite{JKLMR}) that near the large volume limit, which means that $\int_C\Im(\omega) \gg 0$ for all effective curve $C \subset X$,   
there exists a Bridgeland stability condition on $\DD_X$ with central charge of the form 
\begin{equation} \label{Central charge}
\ZZ(E)=- \left \langle \exp_*( \omega ), v_X(E) \right\rangle_{\Muk}. 
\end{equation}
Then the asymptotic behavior of the above central charge near the large volume limit is given by 
\begin{equation} \label{Asymp Z}
\ZZ(E) \sim -\int_X e^{- \omega} v_X(E)+ O(q). 
\end{equation} 
The existence of a Bridgeland stability condition with the asymptotic central charge given by the leading term of the above expression has been proved in various important examples 
including K3 surfaces and abelian surfaces \cite{Bri2}, as well as abelian threefolds \cite{BMS, MP}. 

\section{Weil--Petersson geometry} \label{WP}

\subsection{Classical Weil--Petersson geometry} 

We review some basics of the classical Weil--Petersson geometry on the complex moduli space $\mathcal{M}_{\mathrm{cpx}}(Y)$ of a projective Calabi--Yau $n$-fold $Y$. 
In fact, such a (possibly degenerate) metric can be defined on the complex moduli space of a polarized K\"ahler--Einstein manifold by the Kodaira--Spencer theory,  
but we will use a period theoretic method for a Calabi--Yau manifold (see for example \cite{Tia}). 
It fits naturally into the framework of the Hodge theory and gives us a connection to the Bridgeland stability conditions. 

First, we consider the following vector bundle on the complex moduli space $\mathcal{M}_{\mathrm{cpx}}(Y)$
$$
\mathcal{H}=\mathcal{R}^n\pi_*\underline{\C}\otimes\mathcal{O}_{\mathcal{M}_{\mathrm{cpx}}(Y)} \longrightarrow \mathcal{M}_{\mathrm{cpx}}(Y). 
$$
It comes equipped with a natural Hodge filtration $\{F^*\mathcal{H}\}$ of weight $n$. 
By the Calabi--Yau assumption, the first piece of the filtration defines a holomorphic line bundle $F^n\mathcal{H}\rightarrow\mathcal{M}_{\mathrm{cpx}}(Y)$, which is called the vacuum bundle. 
For a nowhere vanishing local section $\Omega$ of the vacuum bundle, the quantity 
$$
K^{\cpx}_{\WP}(z):=- \log \left((\sqrt{-1})^{n^2}\int_Y\Omega_z\wedge\overline{\Omega}_z\right)
$$
defines a local smooth function $K^{\cpx}_{\WP}$, known as the Weil--Petersson potential, on the complex moduli space $\mathcal{M}_{\mathrm{cpx}}(Y)$. 
Then the Weil--Petersson metric on $\mathcal{M}_{\mathrm{cpx}}(Y)$ is defined to be the Hessian metric $\frac{\sqrt{-1}}{2}\partial \bar{\partial} K^{\cpx}_{\WP}$. 
A fundamental fact is that the Hessian metric is non-degenerate and provides a canonical K\"ahler metric on the complex moduli space $\mathcal{M}_{\mathrm{cpx}}(Y)$. 

For Lagrangian submanifolds $L_1, L_2 \subset Y$, let $\chi(L_1,L_2)=\chi(HF^*(L_1,L_2))$ be the Euler pairing on the derived Fukaya category $\mathrm{D^bFuk}(Y)$.  
The following identity is useful for computing the Weil--Petersson potential.

\begin{Prop}\label{WPformula}
Provided that there exist formal sums of Lagrangian submanifolds $\{L_i\}$ representing a basis of $H_n(Y;\Z)_{\tf}$, 
we have 
\begin{equation}
K^{\cpx}_{\WP}(z)=- \log\Big((\sqrt{-1})^{-n} \sum_{i,j} \chi^{i,j}  \int_{L_i}\Omega_z \int_{L_j}\overline{\Omega_z}\Big),  \label{WP potential}  
\end{equation}
where $(\chi^{i,j})=(\chi(L_i,L_j))^{-1}$ is the inverse matrix. 
\end{Prop}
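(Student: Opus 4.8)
The plan is to strip off the outer $-\log$ and reduce the claim to the pointwise identity
$$
(\sqrt{-1})^{n^2}\int_Y\Omega_z\wedge\overline{\Omega}_z=(\sqrt{-1})^{-n}\sum_{i,j}\chi^{i,j}\int_{L_i}\Omega_z\int_{L_j}\overline{\Omega_z}.
$$
The left-hand side is the Hodge-theoretic self-pairing of the period class, while the right-hand side is assembled from the period vectors $\big(\int_{L_i}\Omega_z\big)_i$ and $\big(\int_{L_j}\overline{\Omega_z}\big)_j$ together with the Euler pairing matrix $\chi(L_i,L_j)$ on $\mathrm{D^bFuk}(Y)$. Thus the entire content is to rewrite the cup-product pairing on $H^n(Y;\C)$ in terms of periods against the homology basis $\{[L_i]\}$ and the intersection data of the $L_i$. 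Since $\Omega_z$ enters only through its cohomology class, I may prove the required identity for arbitrary classes and specialize at the end.

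First I would carry out the linear algebra of Poincar\'e duality. Writing $\gamma_i=[L_i]$ for the chosen basis of $H_n(Y;\Z)_{\tf}$ and $Q_{ij}=\gamma_i\cdot\gamma_j$ for the intersection matrix, the Poincar\'e duals $\{\mathrm{PD}(\gamma_i)\}$ form a basis of $H^n(Y;\C)$ whose cup-product Gram matrix $\int_Y\mathrm{PD}(\gamma_i)\wedge\mathrm{PD}(\gamma_j)$ and whose period matrix $\int_{L_j}\mathrm{PD}(\gamma_i)$ both equal $Q_{ij}$. Expanding a class in this basis and eliminating its coefficients in favour of its periods yields the clean identity
$$
\int_Y\alpha\wedge\beta=\sum_{i,j}(Q^{-1})_{ji}\Big(\int_{L_i}\alpha\Big)\Big(\int_{L_j}\beta\Big),
$$
valid for all $\alpha,\beta\in H^n(Y;\C)$; this is exactly the statement that, under Poincar\'e duality, the cup product is the form inverse to the intersection pairing. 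Specializing $\alpha=\Omega_z$, $\beta=\overline{\Omega_z}$ rewrites $\int_Y\Omega_z\wedge\overline{\Omega}_z$ through the periods appearing on the right of the theorem.

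The second ingredient identifies the topological intersection matrix with the Fukaya-categorical Euler pairing. The Euler characteristic of Floer cohomology computes the graded intersection number, which equals the signed topological intersection number up to a universal sign depending only on $n$; with suitable orientations and gradings one has $\chi(L_i,L_j)=\chi\big(HF^*(L_i,L_j)\big)=(-1)^{n(n+1)/2}\,(\gamma_j\cdot\gamma_i)$. Inverting this relation gives $(Q^{-1})_{ji}=(-1)^{n(n+1)/2}\chi^{i,j}$, and since $(\sqrt{-1})^{n^2}\cdot(-1)^{n(n+1)/2}=(\sqrt{-1})^{-n}$, substituting into the Poincar\'e-duality identity produces exactly the pointwise equality above. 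Taking $-\log$ then recovers \eqref{WP potential}.

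The main obstacle is the bookkeeping of signs and phases, which is where every subtlety of the statement is concentrated. The prefactor $(\sqrt{-1})^{n^2}$ is forced by the Hodge--Riemann positivity of $(\sqrt{-1})^{n^2}\int_Y\Omega_z\wedge\overline{\Omega}_z$, whereas $(\sqrt{-1})^{-n}$ must emerge from the discrepancy between the geometric intersection form and the Euler pairing on $\mathrm{D^bFuk}(Y)$. Reconciling them requires a coherent choice of orientations and gradings of the $L_i$, a fixed Poincar\'e-duality convention (so that the $(-1)^n$-symmetry of both pairings and the transpose entering the passage between a form and its inverse are treated consistently), and the correct universal Floer sign. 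Once these conventions are pinned down the computation is purely formal; confirming that they are mutually compatible, rather than off by a sign or a power of $\sqrt{-1}$, is the delicate point, and is most safely cross-checked against the explicit abelian-surface computations of Section~\ref{Ex}.
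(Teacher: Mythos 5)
Your proposal is correct and takes essentially the same route as the paper: the paper's proof likewise expands $\Omega_z$ and $\overline{\Omega_z}$ in the basis dual to $\{[L_i]\}$ to rewrite the potential through the inverse intersection matrix (your Poincar\'e-duality step), and then invokes the standard Lagrangian Floer identity $[L_1]\cdot[L_2]=(\sqrt{-1})^{n(n+1)}\chi(L_1,L_2)$, citing \cite{GPS}, which is exactly your sign $(-1)^{n(n+1)/2}$ relating the Euler pairing to the intersection form. Your explicit verification $(\sqrt{-1})^{n^2}\cdot(-1)^{n(n+1)/2}=(\sqrt{-1})^{-n}$ is precisely the bookkeeping the paper leaves implicit, so nothing is missing.
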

\begin{proof} 
Let $\{A_i\}$ be a basis of $H_n(Y;\Z)_{\tf}$.  
We define $(\gamma^{i,j})=(A_i\cdot A_j)^{-1}$ to be the inverse of the intersection matrix. 
Then by expanding $\Omega_z$ and $\overline{\Omega_z}$ by the dual basis of $\{A_i\}$, we can rewrite the Weil--Petersson potential as 
$$
K^{\cpx}_{\WP}(z)=- \log\Big((\sqrt{-1})^{n^2} \sum_{i,j} \gamma^{i,j}  \int_{A_i}\Omega_z \int_{A_j}\overline{\Omega_z}\Big).
$$
On the other hand, for Lagrangian submanifolds $L_1, L_2 \subset Y$, the identity
$$
[L_1]\cdot[L_2]=(\sqrt{-1})^{n(n+1)}\chi(L_1,L_2),
$$
is standard in the Lagrangian Floer theory (see for example \cite[Section 4.3]{GPS}). 
This completes the proof. 
\end{proof}

An advantage of the new expression (Equation (\ref{WP potential})) is that it is not only categorical but also Hodge theoretic in the sense that it is written in terms of period integrals. 

\subsection{Weil--Petersson geometry on $\Stab_{\NN}(\DD)$}\label{WPStab}

Motivated by Remark \ref{StabFuk} and Proposition \ref{WPformula}, we shall propose a definition of Weil--Petersson geometry 
on a suitable quotient of the space of Bridgeland stability conditions on a Calabi--Yau triangulated category $\DD$ of dimension $n \in \N$, i.e. 
for every pair of objects $E$ and $F$, there is a natural isomorphism
$$
\mathrm{Hom}^{*}_{\DD}(E,F)\cong\mathrm{Hom}^*_{\DD}(F,E[n])^{\vee}.
$$
An important consequence is that the Euler form on $\NN(\DD)$ is (skew-)symmetric if $n$ is even (odd).

Let $\{E_i\}$ be a basis of the numerical Grothendieck group $\NN(\DD)$.  
We define a bilinear form $\mathfrak{b}: \Hom(\NN(\DD),\C)^{\otimes 2} \rightarrow \C$ by
$$
\ZZ_1 \otimes \ZZ_2 \mapsto \mathfrak{b}(\ZZ_1,\ZZ_2):=\sum_{i,j} \chi^{i,j}   \ZZ_1(E_i)\ZZ_2(E_j),  
$$
where $(\chi^{i,j}):=(\chi(E_i,E_j))^{-1}$.  
It is an easy exercise to check that the bilinear form $\mathfrak{b}$ is independent of the choice of a basis. 

\begin{Def} \label{Stab+}
We define the subset $\Stab_{\NN}^+(\DD) \subset \Stab_{\NN}(\DD)$ by 
$$
\Stab_{\NN}^+(\DD):=\{ \sigma=(\ZZ,\PP) \ | \ \mathfrak{b}(\ZZ,\ZZ)=0, \ (\sqrt{-1})^{-n} \mathfrak{b}(\ZZ,\overline{\ZZ})>0\}.  
$$
\end{Def}
The first condition is vacuous when $n$ is odd as the bilinear form $\mathfrak{b}$ is skew-symmetric. 
Such conditions have been studied in the case of K3 surfaces (a dual description via the Mukai pairing) under the name of {\it reduced} stability conditions \cite{Bri2}. 
We note that $\Stab_{\NN}^+(\DD)$ is an analogue of a period domain in the Hodge theory, and the natural free $\C$-action on $\Stab_{\NN}(\DD)$ preserves the subset $\Stab_{\NN}^+(\DD)$.

\begin{Def}\label{WPpotential}
Let $s=(\ZZ_{\bar{\sigma}},\PP_{\bar{\sigma}})$ be a local holomorphic section of the $\C$-torsor $\Stab_{\NN}^+(\DD)\rightarrow\Stab_{\NN}^+(\DD)/\C$, then
$$
K_{\WP}(\bar{\sigma}):= -\log \Big((\sqrt{-1})^{-n} \mathfrak{b}(\ZZ_{\bar{\sigma}},\overline{\ZZ_{\bar{\sigma}}}) \Big)
$$ 
defines a local smooth function on $\Stab_{\NN}^+(\DD)/\C$. 
We call it the Weil--Petersson potential on $\Stab_{\NN}^+(\DD)/\C$. 
\end{Def}

\begin{Prop}
The complex Hessian $\frac{\sqrt{-1}}{2}\partial\overline{\partial}K_{\WP}$ of the Weil--Petersson potential $K_{\WP}$ does not depend on the choice of a local section $s$. 
Moreover, it descends to the double quotient space 
$$
\Aut(\DD) \backslash\Stab_{\NN}^+(\DD)/ \C
$$
away from singular loci. 
\end{Prop}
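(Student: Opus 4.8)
The plan is to imitate the classical argument from the previous subsection: there the potential $K^{\cpx}_{\WP}$ depends on the choice of trivialization $\Omega$ of the vacuum bundle, but its complex Hessian does not, precisely because rescaling $\Omega$ by a nowhere-vanishing holomorphic function alters $K^{\cpx}_{\WP}$ only by a pluriharmonic term. I would run the same two-step reasoning here, first handling the right $\C$-torsor structure (independence of the section $s$) and then the left $\Aut(\DD)$-action (descent). Throughout, smoothness of $K_{\WP}$ is guaranteed on $\Stab_{\NN}^+(\DD)$ by the positivity $(\sqrt{-1})^{-n}\mathfrak{b}(\ZZ,\overline{\ZZ})>0$ built into Definition \ref{Stab+}.

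For section independence, two local holomorphic sections $s,s'$ of $\Stab_{\NN}^+(\DD)\to\Stab_{\NN}^+(\DD)/\C$ differ by the right $\C$-action, so $s'=s\cdot\lambda$ for a holomorphic function $\lambda$ on the base. First I would record that the $\C\subset\widetilde{\GL^+(2,\R)}$-action rescales the central charge by a nowhere-vanishing holomorphic factor, $\ZZ_{s'}=g\cdot\ZZ_s$ with $g=e^{-\sqrt{-1}\pi\lambda}$ (in Bridgeland's normalization of the $\C$-action); the exact constant is irrelevant, only holomorphicity and non-vanishing of $g$ matter. Bilinearity of $\mathfrak{b}$ then gives $\mathfrak{b}(\ZZ_{s'},\overline{\ZZ_{s'}})=|g|^2\,\mathfrak{b}(\ZZ_s,\overline{\ZZ_s})$, so $K_{\WP}^{s'}=K_{\WP}^{s}-\log|g|^2$. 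Since $\log|g|^2=\log g+\overline{\log g}$ is pluriharmonic, $\partial\overline{\partial}\log|g|^2=0$, and hence $\partial\overline{\partial}K_{\WP}^{s'}=\partial\overline{\partial}K_{\WP}^{s}$. For the $\Aut(\DD)$-invariance, an autoequivalence $\Phi$ induces an automorphism $\Phi_*$ of $\NN(\DD)$ preserving the Euler form $\chi$, because $\Phi$ is exact and preserves the graded dimensions of $\Hom$-spaces. The action sends $(\ZZ,\PP)$ to $(\ZZ\circ\Phi_*^{-1},\Phi(\PP))$, so the central charge is pulled back along the $\chi$-isometry $\Phi_*^{-1}$; as $\mathfrak{b}$ is the basis-independent form induced canonically from $\chi$ on $\Hom(\NN(\DD),\C)$, any such isometry preserves it, giving $\mathfrak{b}(\ZZ\circ\Phi_*^{-1},\ZZ'\circ\Phi_*^{-1})=\mathfrak{b}(\ZZ,\ZZ')$. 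Thus $K_{\WP}$ is invariant under $\Aut(\DD)$ for matched sections, and combined with section independence the two-form $\frac{\sqrt{-1}}{2}\partial\overline{\partial}K_{\WP}$ descends to $\Aut(\DD)\backslash\Stab_{\NN}^+(\DD)/\C$ on the locus where the $\Aut(\DD)$-action is free and properly discontinuous.

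The two invariance computations are routine once the transformation laws are pinned down, so the real care lies in two places. The first is confirming the precise holomorphic scaling of $\ZZ$ under the $\C$-action: it is exactly the fact that the transition factor $g$ is holomorphic and nowhere vanishing that forces the correction $\log|g|^2$ to be pluriharmonic rather than merely smooth, and this is what makes the Hessian — but \emph{not} the potential itself — a well-defined object. The second is the phrase \emph{away from singular loci}: the double quotient $\Aut(\DD)\backslash\Stab_{\NN}^+(\DD)/\C$ is in general only an orbifold, with orbifold points where $\Aut(\DD)$ fails to act freely, so the descended Hessian is a priori defined only on the smooth part of the quotient and one should not expect a globally defined potential, only a globally defined closed real $(1,1)$-form.
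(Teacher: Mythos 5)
Your proposal is correct and follows essentially the same route as the paper, which simply declares the section-independence ``standard'' and sketches the $\Aut(\DD)$-descent via the observations that autoequivalences preserve the Euler pairing (hence the basis-independent form $\mathfrak{b}$) and that sections identified by $\Aut(\DD)$ differ by multiplication by local holomorphic functions. Your write-up merely fills in the details the paper leaves implicit --- the pluriharmonicity of $\log|g|^2$ and the orbifold caveat behind ``away from singular loci'' --- both of which are accurate.
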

\begin{proof}
The first assertion is standard. 
The second assertion follows from the fact that autoequivalences are compatible with the Euler pairing,
and the induced actions on the numerical Grothendieck group $\NN(\DD)$ send a basis to another basis.
Therefore the local sections which are identified by elements of $\Aut(\DD)$ differ only by multiplying local holomorphic functions, and thus the well-definedness of the metric follows
from that of $\mathfrak{b}$.
\end{proof}

The situation is particularly interesting when $n$ is odd, as $\Stab_{\NN}(\DD)$ naturally carries a holomorphic symplectic structure.  
Given a symplectic basis $\{E_i,F_i\}$ of $\NN(\DD)$, 
the skew-symmetric bilinear form $\mathfrak{b}: \Hom(\NN(\DD),\C)^{\otimes 2} \rightarrow \C$ is simply
$$
\ZZ_1 \otimes \ZZ_2 \mapsto \sum_i \Big( \ZZ_1(F_i)\ZZ_2(E_i) - \ZZ_1(E_i)\ZZ_2(F_i) \Big),
$$
which provides a nowhere vanishing holomorphic $2$-form on $\Stab_{\NN}(\DD)$.  

\begin{Ex} \label{ell curve}
As a sanity check, we shall carry out the above construction for the derived category $\DD_X=\mathrm{D^bCoh}(X)$ of an elliptic curve $X$. 
Since the action of $\widetilde{\GL^+(2,\R)}$ on $\Stab_{\NN}(\DD_X)$ is free and transitive \cite[Theorem 9.1]{Bri}, we have 
$$
\Stab_{\NN}^+(\DD_X)=\Stab_{\NN}(\DD_X)  \cong \widetilde{\GL^+(2,\R)} \cong \C \times \HH,
$$
as a complex manifold. Thus the double quotient  is
$$
\Aut(\DD_X) \backslash\Stab_{\NN}^+(\DD_X)/ \C \cong \mathrm{PSL}(2,\Z) \backslash \HH. 
$$
This is indeed the K\"ahler moduli space of the elliptic curve $X$.
The normalized central charge at $\tau\in\HH$ is given by
$$
\ZZ(E)=-\deg(E)+ \tau \cdot \rank(E).
$$ 
Hence the Weil--Petersson potential is 
\begin{align}
K_{\WP}(\tau)&=-\log\Big((\sqrt{-1})^{-1}
(\mathcal{Z}(\mathcal{O}_p)\overline{\mathcal{Z}}(\mathcal{O}_E)-
\mathcal{Z}(\mathcal{O}_E)\overline{\mathcal{Z}}(\mathcal{O}_p))\Big) \notag \\
&=-\log(\Im(\tau))-\log2. \notag
\end{align}
This is the Poincar\'e potential on $\HH$ and it descends to the K\"ahler moduli space $\mathrm{PSL}(2,\Z) \backslash \HH$.   
\end{Ex}

\subsection{Refining conjecture} 
Let $X$ be a projective Calabi--Yau $n$-fold.  
Then $\Stab_{\NN}(\DD_X)$ can be considered as an extended version of the stringy K\"ahler moduli space $\mathcal{M}_{\mathrm{Kah}}(X)$ \cite[Section 7.1]{Bri3}.
It is akin to the big quantum cohomology rather than the small quantum cohomology 
in the sense that the tangent space of $\mathcal{M}_{\mathrm{Kah}}(X)$ is $H^{1,1}(X)$ while that of $\Stab_{\NN}(\DD_X)$ is $\oplus_p H^{p,p}(X)$. 
It is conjectured by Bridgeland \cite{Bri} that there should exist an embedding of $\mathcal{M}_{\mathrm{Kah}}(X)$ into 
$$
\mathrm{Aut}(\DD_X) \backslash \Stab_{\NN}(\DD_X)/ \C. 
$$

Note that when $n$ is odd, the double quotient is a holomorphic contact space thanks to the holomorphic symplectic structure on $\Stab_{\NN}(\DD_X)$.

Motivated by mirror symmetry and classical Weil--Petersson geometry, especially the fact that Weil--Petersson metric is non-degenerate on $\mathcal{M}_{\mathrm{cpx}}(X)$,
we can now propose the following, which refines the previous conjecture.

\begin{Conj} \label{Conj} 
There exists an embedding of the stringy K\"ahler moduli space
$$
\iota:\mathcal{M}_{\mathrm{Kah}}(X)\hookrightarrow\mathrm{Aut}(\DD_X) \backslash\Stab_{\NN}^+(\DD_X)/ \C. 
$$
The complex Hessian of the pullback $\iota^*K_{\WP}$ of the Weil--Petersson potential $K_{\WP}$ defines a K\"ahler metric on $\mathcal{M}_{\mathrm{Kah}}(X)$, 
i.e. non-degenerate. 
Moreover, it is identified with the Weil--Petersson metric on the complex moduli space $\mathcal{M}_{\mathrm{cpx}}(Y)$ of a mirror manifold $Y$ 
under the mirror map $\mathcal{M}_{\mathrm{Kah}}(X) \cong \mathcal{M}_{\mathrm{cpx}}(Y)$. 
When $n=3$, the image of $\mathcal{M}_{\mathrm{Kah}}(X)$ is locally a holomorphic Legendre variety. 
\end{Conj}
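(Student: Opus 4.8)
The plan is to build the embedding $\iota$ near a large volume limit out of the conjectural central charge and then transport the remaining three assertions from the classical B-model side through homological mirror symmetry. Concretely, over a neighborhood of a large volume limit I would send the complexified K\"ahler parameter $\omega = B + \sqrt{-1}\kappa$ to the stability condition whose central charge is Equation (\ref{Central charge}), $\ZZ(E) = -\langle \exp_*(\omega), v_X(E)\rangle_{\Muk}$. The first point to verify is that this lands in $\Stab_{\NN}^+(\DD_X)$ in the sense of Definition \ref{Stab+}. Identifying $\Hom(\NN(\DD_X),\C)$ with $\NN(\DD_X)_\C$ via the Euler (= Mukai) pairing, the form $\mathfrak{b}$ is identified up to a sign with the Mukai pairing and $\ZZ$ is represented by the vector $\exp_*(\omega)$; the isotropy condition $\mathfrak{b}(\ZZ,\ZZ)=0$ then reads $\langle \exp_*(\omega), \exp_*(\omega)\rangle_{\Muk}=0$, mirroring $\int_Y \Omega\wedge\Omega = 0$, while the positivity $(\sqrt{-1})^{-n}\mathfrak{b}(\ZZ,\overline\ZZ)>0$ mirrors the Hodge--Riemann inequality $(\sqrt{-1})^{n^2}\int_Y \Omega\wedge\overline\Omega>0$ and should follow from $\kappa$ being genuinely K\"ahler.

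The identification of the two metrics is then essentially forced by the definitions. Under homological mirror symmetry a basis $\{E_i\}$ of $\NN(\DD_X)$ corresponds to a basis $\{L_i\}$ of $H_n(Y;\Z)_{\tf}$ with $\chi(E_i,E_j) = \chi(L_i,L_j)$, and by Remark \ref{StabFuk} the central charge is the period $\ZZ(E_i) = \int_{L_i}\Omega$. Feeding these into Definition \ref{WPpotential} reproduces exactly the expression for $K^{\cpx}_{\WP}$ in Proposition \ref{WPformula}, so that $\iota^* K_{\WP} = K^{\cpx}_{\WP}$ under the mirror map. Non-degeneracy of the resulting Hessian metric is then inherited from the classical fact that the Weil--Petersson metric on $\mathcal{M}_{\mathrm{cpx}}(Y)$ is non-degenerate, which rests on the infinitesimal Torelli theorem / injectivity of the Kodaira--Spencer map. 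For the final assertion, when $n=3$ the $\C$-quotient of the isotropic locus $\Stab_{\NN}^+(\DD_X)$ carries the holomorphic contact structure induced by the skew-symmetric $\mathfrak{b}$, and the Legendrian property of the image is precisely the special-geometry statement that the projectivized period map of a Calabi--Yau threefold has Legendrian image for the intersection form, transported through the same identification.

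The main obstacle is that two of the required inputs are themselves deep and largely conjectural: the existence of a genuine Bridgeland stability condition with central charge (\ref{Central charge}) (the folklore conjecture of Remark \ref{StabFuk}, known only in special cases such as the surfaces treated here), and the full strength of homological mirror symmetry needed to match the categorical Euler pairing with the topological intersection pairing and the central charges with period integrals. For this reason I would not expect to prove the conjecture in general; the realistic strategy is to establish the reductions above cleanly and then verify the non-degeneracy and the metric identification unconditionally in the cases where the stability condition is actually constructed, most notably the self-product $E_\tau\times E_\tau$, where the computation should reproduce the Bergman metric on $\mathrm{Sp}(4,\Z)\backslash\mathfrak{H}_2$ as in the main theorem. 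Even granting existence, the subtlest technical point will be to show that $\iota$ is a global immersion rather than merely a local one near the large volume limit, since the quantum corrections encoded in $\exp_*(\omega)$ and the global structure of $\Aut(\DD_X)$ must be controlled away from the cusp.
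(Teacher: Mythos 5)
This statement is a conjecture, and the paper offers no proof of it --- only the motivating heuristics (Remark \ref{StabFuk}, Proposition \ref{WPformula}) and verification in examples; your reductions (representing $\ZZ$ by a Mukai vector so that $\mathfrak{b}$ becomes the Mukai pairing up to sign, and deducing positivity from the K\"ahler condition) are precisely the paper's Lemma \ref{b-mho} and Proposition \ref{WP_B}, and your fallback of verifying the case $E_\tau\times E_\tau$ against the Bergman metric on $\mathrm{Sp}(4,\Z)\backslash\mathfrak{H}_2$ is exactly the paper's Theorem \ref{Stab-Siegel} and Corollary \ref{WP=Bergman}. Your proposal thus takes essentially the same approach as the paper and correctly identifies the conjectural inputs (existence of the stability condition with central charge (\ref{Central charge}) and full homological mirror symmetry) that preclude a general proof.
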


We checked that the conjecture holds for the elliptic curves (Example \ref{ell curve}) and will provide more supporting evidence in the next section. 
It is worth noting that the real difficulty lies in providing a mathematical definition of the stringy K\"ahler moduli space $\mathcal{M}_{\mathrm{Kah}}(X)$. 
One potential application of the above conjecture is that,
we can make use of the non-degeneracy condition on the Weil--Petersson metric to characterize $\mathcal{M}_{\mathrm{Kah}}(X)$.


\section{Computation} \label{Ex}

We begin our discussion with Calabi--Yau surfaces, for which there is a mathematical definition of stringy K\"ahler moduli space via the Bridgeland stability conditions \cite[Section 7]{BB}. 
Our computation heavily relies on existing deep results, mainly due to Bridgeland, and we do not claim originality. 
The purpose of this section is to back up our conjecture by concrete examples. 

\subsection{Self-product of elliptic curve}
We consider the self-product $A:=E_{\tau}\times E_{\tau}$ of an elliptic curve $E_\tau:=\C/(\Z +\tau \Z)$ for a generic $\tau \in \HH$. 
We denote by $\NS(A):=H^2(A,\Z) \cap H^{1,1}(A)$ the N\'eron--Severi lattice of $A$.  
Before considering the space of stability conditions, let us take a look at the set of complexified K\"ahler forms $\omega \in \NS(A)_\C$. 
Let $dz_1$ be a basis of $H^{1,0}(E_\tau)$ of the first $E_\tau$ factor of $A$, and $dz_2$ similarly. 
Then a complexified K\"ahler form $\omega$ can be expressed as 
$$
\omega = \sqrt{-1} \left(\rho dz_1\wedge d\bar{z}_1 + \tau dz_2 \wedge d\overline{z}_2 + \sigma (dz_1 \wedge d\bar{z}_2 - d\bar{z}_1 \wedge dz_2)\right)
$$
such that the imaginary part $\Im(\omega)$ is a K\"ahler form. 
The real part $\Re(\omega)$ is often called a B-field. 
Let $\mathfrak{H}_g$ be the Siegel upper half-space of degree $g$ defined by 
$$
\mathfrak{H}_g:=\{M \in \mathrm{M}(g,\C) \ | \ M^t=M, \Im(M)>0 \}.
$$
In this abelian surface example, we do not fix a polarization, but we vary it in a 3-dimensional space $\mathfrak{H}_2$ as follows.

\begin{Prop}[\cite{KL}] \label{identify}
Let $A_g:=E_\tau^{\times g}$ be the self-product of $g$ copies of an elliptic curve $E_\tau$
The set of complexified K\"ahler forms can be identified with the Siegel upper-half space $\mathfrak{H}_g$ of genus $g$. 
In the $g=2$ case, the identification is given by the assignment 
$\omega \mapsto M_\omega:=\begin{bmatrix}
                \rho   &  \sigma  \\
                 \sigma& \tau  \\
\end{bmatrix}$. 
\end{Prop}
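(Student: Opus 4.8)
The plan is to realize a complexified Kähler form on $A_g=\C^g/\Lambda$, where $\Lambda=(\Z+\tau\Z)^g$, as a complex symmetric $g\times g$ matrix, and then to match the condition that $\Im(\omega)$ be a Kähler form with the defining inequality of the Siegel upper half-space. First I would record the linear algebra of $(1,1)$-classes. By the Künneth decomposition $H^2(A_g;\C)=\wedge^2 H^1(A_g;\C)$, the space $H^{1,1}(A_g)$ is spanned by the $g^2$ forms $dz_i\wedge d\bar z_j$, and a real $(1,1)$-class can be written as $\sqrt{-1}\sum_{i,j}h_{ij}\,dz_i\wedge d\bar z_j$ with $(h_{ij})$ a Hermitian matrix; such a class is a Kähler form precisely when $(h_{ij})$ is positive definite. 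A complexified Kähler form $\omega=B+\sqrt{-1}\kappa\in\NS(A_g)_\C$ is thus encoded by a complex matrix whose imaginary part is the Hermitian matrix attached to the Kähler class $\kappa$.

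The heart of the argument is to pin down which matrices occur, that is, to identify $\NS(A_g)_\C$. Here I would use the genericity of $\tau$: for $\tau\in\HH$ with no complex multiplication one has $\End(E_\tau)=\Z$, and the classical description of the Néron--Severi group of an abelian variety via the Rosati involution gives $\NS(A_g)\cong\{M\in \mathrm{M}(g,\Z)\ :\ M=M^t\}$, the lattice of symmetric integer matrices, of rank $g(g+1)/2=\dim_\C\mathfrak{H}_g$. Concretely one checks this by imposing integrality of $\Im H$ on $\Lambda$ and using that $\tau$ is generic: the diagonal entries $h_{ii}$ remain free, while the off-diagonal data is forced into the \emph{symmetric} combinations, so that the real classes correspond to real symmetric matrices and hence $\NS(A_g)_\C\cong\mathrm{Sym}(g,\C)$. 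For $g=2$ this produces exactly the three generators $dz_1\wedge d\bar z_1$, $dz_2\wedge d\bar z_2$ and $dz_1\wedge d\bar z_2-d\bar z_1\wedge dz_2=dz_1\wedge d\bar z_2+dz_2\wedge d\bar z_1$ appearing in the stated expression for $\omega$, and the assignment $\omega\mapsto M_\omega=\bigl[\begin{smallmatrix}\rho&\sigma\\ \sigma&\tau\end{smallmatrix}\bigr]$ is precisely the resulting linear isomorphism $\NS(A_2)_\C\xrightarrow{\sim}\mathrm{Sym}(2,\C)$.

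With the isomorphism in hand, the positivity step is short. Since $M_\omega$ is symmetric, so are its real and imaginary parts, and $\Im(M_\omega)$ is exactly the (real symmetric, hence Hermitian) matrix attached to the real $(1,1)$-class $\Im(\omega)=\kappa$. Therefore $\Im(\omega)$ is a Kähler form if and only if $\Im(M_\omega)>0$, which together with the symmetry $M_\omega^t=M_\omega$ is the defining condition for $M_\omega\in\mathfrak{H}_g$. Hence $\omega\mapsto M_\omega$ restricts to a bijection from the set of complexified Kähler forms onto $\mathfrak{H}_g$, completing the identification.

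The main obstacle is the middle step: showing that the algebraic $(1,1)$-classes are governed by \emph{symmetric} rather than arbitrary Hermitian matrices. This is exactly where the assumption that $\tau$ is generic enters, since for special (e.g. CM) $\tau$ the rank of $\NS(A_g)$ jumps and the off-diagonal entries are no longer constrained to the symmetric combination, so the target ceases to be $\mathfrak{H}_g$. I would therefore treat the integrality computation for $\Im H$ on $\Lambda=(\Z+\tau\Z)^g$, or equivalently the Rosati-involution description of $\NS(A_g)$, as the crux, with the Künneth setup and the positivity translation being routine.
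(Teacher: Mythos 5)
Your proposal is correct, but it follows a genuinely different route from the paper. The paper's proof treats only $g=2$ in detail and is intersection-theoretic: it verifies $\Im(M_\omega)>0$ via the $2\times 2$-specific criterion of positive trace and positive determinant, where the trace is interpreted as $\int_{E_\tau\times \mathrm{pt}}\Im(\omega)+\int_{\mathrm{pt}\times E_\tau}\Im(\omega)>0$ and the determinant as $\Im(\omega)^2>0$, and it obtains surjectivity from the surface-specific fact that $A_2$ contains no rational curves, so the K\"ahler cone is the full connected component of the positive cone containing a K\"ahler class; the higher-genus case is deferred to \cite{KL}. You instead argue uniformly in $g$ on the torus $\C^g/\Lambda$: invariant real $(1,1)$-classes are Hermitian matrices, K\"ahler classes are exactly the positive-definite ones, and the crux is the identification $\NS(A_g)\cong\mathrm{Sym}(g,\Z)$ via the Rosati involution (equivalently, integrality of $\Im H$ on $\Lambda$) under the genericity hypothesis on $\tau$. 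Your approach buys three things the paper's argument does not give directly: it proves both containments at once (the positive-definiteness criterion on a torus replaces the K\"ahler-cone-equals-positive-cone input), it works for all $g$ rather than relying on the $2\times 2$ trace-determinant test, and it makes explicit where genericity of $\tau$ enters — namely in cutting the Hermitian matrices down to symmetric ones, which the paper absorbs implicitly into the three-parameter form of $\omega$ and the lattice identification $\NN(\DD_A)\cong U^{\oplus 2}\oplus\langle 2\rangle$. What the paper's route buys in exchange is brevity and a concrete geometric reading of the positivity conditions (degrees on the two fibers and volume), avoiding any appeal to $\End(A_g)=\mathrm{M}(g,\Z)$ and the Rosati formalism.
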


\begin{proof}
The $g=1$ case is standard. Suppose that $g=2$. 
It suffices to show that $\Im(M_\omega)>0$. 
Since $\omega$ is a complexified K\"ahler form, we have 
$$
\tr(\Im(M_\omega))=\Im(\rho)+\Im(\tau)=\int_{E_\tau \times \mathrm{pt}}\Im(\omega)+\int_{\mathrm{pt} \times E_\tau}\Im(\omega) >0
$$
and 
$$
\det \left(\Im(M_\omega)\right)=\Im(\rho)\Im(\tau)-\Im(\sigma)^2=\Im(\omega)^2>0, 
$$ 
and thus $M_\omega \in \mathfrak{H}_2$. 

On the other hand, since $A_2$ contains no rational curves, the K\"ahler cone coincides with the connected component of the positive cone in $\NS(S)$ which contains a K\"ahler class. 
This readily proves the assertion for $g=2$.  
Since we do not need the higher genus case, we leave a proof to the reader (c.f. \cite[Section 6]{KL}).  
\end{proof}

We now recall some notations in \cite{Bri2}.
A result of Orlov \cite[Proposition 3.5]{Orl2} shows that every autoequivalence of $\DD_A$ induces a Hodge isometry of the lattice $H^*(A;\Z)$ equipped with the Mukai pairing.
Hence there is a group homomorphism
$$
\delta:\Aut(\DD_A)\longrightarrow\Aut H^*(A;\Z).
$$
The kernel of the homomorphism will be denoted by $\Aut^0(\DD_A)$.

Let $\Omega\in H^2(A;\C)$ be the class of a nonzero holomorphic two-form on $A$.
The sublattice
$$
\NN(A):= H^*(A;\Z) \cap \Omega^{\perp} \subset H^*(A;\C)
$$
can be identified with $\NN(\DD_A)=H^0(A;\Z)\oplus \NS(A) \oplus H^4(A;\Z)$ and has signature $(3,2)$.
In fact, since the complex moduli $\tau \in \HH$ is generic, $\NN(\DD_A)\cong U^{\oplus 2}\oplus \langle 2 \rangle $ as a lattice. 
Here $U$ is the hyperbolic lattice, and $\langle 2\rangle$ denotes an integral lattice of rank 1 with the Gram matrix $(2)$.

We define a subset $\PP(A) \subset \NN(\DD_A)_\C$ consisting of vectors $\mho \in \NN(\DD_A)_\C$ 
whose real and imaginary parts span a negative definite $2$-plane in $\NN(\DD_A)_\R$.  
This subset has two connected components.
We denote by $\PP^+(A)$ the component containing vectors of the form 
$\mho_\omega:=\exp(\omega)$ 
for a complexified K\"ahler class $\omega \in \NS(A)_\C$.

Let us review some results on the space of Bridgeland stability conditions on algebraic surfaces following \cite{Bri2}.
The central charge of a numerical stability condition is of the form
$$
\ZZ_\mho(E)=-\langle \mho, v_A(E) \rangle_{\Muk}=-\langle \mho,  \ch(E) \rangle_{\Muk}
$$
for some $\mho \in \NN(\DD_A)_\C$. 
When $\mho=\mho_{\omega}$ for some complexified K\"ahler class $\omega$, 
one can construct a stability condition with central charge $\ZZ_{\mho_{\omega}}$ using the tilting theory and Bogomolov inequality.
Moreover, such stability conditions are \emph{geometric} in the sense that all skyscraper sheaves are stable and of the same phase.
We denote by $\Stab^\dagger(A) \subset \Stab(A)$  the connected component containing the set of geometric stability conditions.
The following result on the global structure of $\Stab^\dagger(A)$ is due to Bridgeland \cite[Section 15]{Bri2}.

\begin{Thm}[\cite{Bri2}] \label{BriK3}
Let $A$ be an abelian surface over $\C$.
\begin{enumerate}
\item The forgetful map $\pi$ sending a stability condition to the associated vector $\mho \in \NN(\DD_A)_\C$ maps onto the open subset $ \PP^+(A)\subset\NN(\DD_A)_\C$.
Moreover, the map
$$
\pi:\Stab^\dagger(A) \longrightarrow \PP^+(A).  
$$
is the universal cover of $\PP^+(A)$ with the group of deck transformations generated by the double shift functor $[2] \in \Aut(\DD_A)$.

\item The action of $\Aut(\DD_A)$ on $\Stab(A)$ preserves the connected component $\Stab^\dagger(A)$.

\item The group $\Aut^0(\DD_A)$ is generated by the double shift functor $[2]$, together with twists by elements of $\mathrm{Pic}^0(A)$, and pullbacks by automorphisms of $A$ acting trivially on $H^*(A;\Z)$.
Note that twists by elements of $\mathrm{Pic}^0(A)$ and pullbacks by automorphisms of $A$ acting trivially on $H^*(A;\Z)$, act trivially on $\Stab^\dagger(A)$.

\item There exists a short exact sequence of groups
$$
1\longrightarrow\Aut^0(\DD_A)\longrightarrow\Aut(\DD_A)\longrightarrow\Aut^+H^*(A;\Z)\longrightarrow1,
$$
where $\Aut^+H^*(A;\Z)\subset\Aut H^*(A;\Z)$ is the index 2 subgroup consisting of elements which do not exchange the two components of $\PP(A)$.

\end{enumerate}
\end{Thm}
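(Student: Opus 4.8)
The plan is to follow Bridgeland's analysis of K3 surfaces \cite{Bri2}, exploiting the crucial simplification that an abelian surface carries no spherical objects, so that the period domain $\PP^+(A)$ requires no hyperplanes to be deleted. For (1) I would first invoke the local structure theory: the forgetful map $\sigma=(\ZZ,\PP)\mapsto\ZZ$ identifies a neighbourhood of each stability condition with an open subset of $\Hom(\NN(\DD_A),\C)$, so $\pi$ is a local homeomorphism onto its image. A direct computation with $\mho_\omega=\exp(\omega)$ shows that geometric stability conditions map into $\PP^+(A)$; the support property keeps the image inside the negative definite locus, and connectedness of $\Stab^\dagger(A)$ then confines $\pi(\Stab^\dagger(A))$ to the single component $\PP^+(A)$. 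The heart of the argument is to promote ``local homeomorphism'' to ``covering map'' by showing that every path in $\PP^+(A)$ lifts: a lift can fail to extend only if a family of semistable objects has its mass degenerate to zero, and the support property \cite{KS} together with the absence of spherical classes excludes this. Here the input specific to abelian surfaces is that every simple object $E$ satisfies $\dim\mathrm{Ext}^1(E,E)\ge 2$, since translations of $A$ deform $E$ nontrivially; hence $\chi(E,E)=\langle v(E),v(E)\rangle_\Muk\le 0$, no object is spherical, and there are no walls to remove from the period domain.

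To pin down the deck group I would compute $\pi_1(\PP^+(A))$. Sending $\mho$ to the oriented $2$-plane $\mathrm{span}_\R\{\Re\mho,\Im\mho\}$ exhibits $\PP^+(A)$ as a $\GL^+(2,\R)$-bundle over one component of the space of oriented negative definite $2$-planes in $\NN(\DD_A)_\R$. The base is the Riemannian symmetric space of $SO(3,2)$, which is contractible (indeed biholomorphic to the Siegel upper half-space $\mathfrak{H}_2$ of the main theorem), while $\GL^+(2,\R)$ retracts onto $SO(2)$; the homotopy exact sequence of the bundle then gives $\pi_1(\PP^+(A))\cong\Z$. Since $[2]$ acts on the central charge by $\mho\mapsto\mho$ while rotating the framing $(\Re\mho,\Im\mho)$ through a full turn, it represents a generator of this $\Z$, so $\pi:\Stab^\dagger(A)\to\PP^+(A)$ is the universal cover with deck group $\langle[2]\rangle$.

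For (2) I would show $\delta(\Aut(\DD_A))\subseteq\Aut^+H^*(A;\Z)$, i.e. that no autoequivalence interchanges the two components of $\PP(A)$; since $\Aut(\DD_A)$ acts on $\Stab(A)$ by homeomorphisms commuting with $\pi$, and $\Stab^\dagger(A)$ is the unique component realizing the universal cover of $\PP^+(A)$, the component $\Stab^\dagger(A)$ is preserved. For (3) I would feed in Orlov's classification of autoequivalences of abelian varieties \cite{Orl2} to identify $\ker\delta=\Aut^0(\DD_A)$ with the subgroup generated by $[2]$, twists by $\mathrm{Pic}^0(A)$, and automorphisms of $A$ acting trivially on $H^*(A;\Z)$; the last two act trivially on $\NN(\DD_A)$ and fix every subcategory $\PP(\phi)$, hence act trivially on $\Stab^\dagger(A)$. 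For (4), exactness on the left and in the middle is the definition of $\Aut^0(\DD_A)$, and the only substantial point is surjectivity onto $\Aut^+H^*(A;\Z)$: every orientation-preserving Hodge isometry of the Mukai lattice must lift to an autoequivalence, which is the derived Torelli theorem for abelian surfaces \cite{Orl2}.

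The two genuinely hard inputs are the properness step that upgrades $\pi$ to a covering map --- where the absence of spherical objects, hence of walls, is indispensable --- and the surjectivity in (4), which requires the full strength of Orlov's derived-equivalence classification. The remaining steps are either formal or a direct homotopy computation.
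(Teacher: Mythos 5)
This theorem is not proved in the paper at all: it is quoted verbatim from Bridgeland, with the proof deferred to \cite[Section 15]{Bri2}. So your proposal can only be judged against Bridgeland's original argument. For parts (1), (3) and (4) your reconstruction is essentially faithful to it: the absence of spherical objects via $\dim\mathrm{Ext}^1(E,E)\geq 2$ (translations of $A$ deform every simple object), the presentation of $\PP^+(A)$ as a $\GL^+(2,\R)$-bundle over the contractible symmetric space of oriented negative-definite $2$-planes giving $\pi_1(\PP^+(A))\cong\Z$, and the identification of $[2]$ with the monodromy generator are exactly Bridgeland's ingredients; and (3)--(4) do rest on Orlov's classification of derived equivalences of abelian varieties (note, though, that the reference \cite{Orl2} in this paper's bibliography is Orlov's K3 paper, used only for the Hodge-isometry statement; the classification you need is Orlov's abelian-varieties paper, together with the computation of the cohomological image). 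Your upgrade of the local homeomorphism to a covering via the support property is a legitimate, slightly more modern route than Bridgeland's original analysis of the boundary of the geometric chamber, and it is correct in shape.

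The genuine gap is in part (2). You deduce preservation of $\Stab^\dagger(A)$ from the claim that it is ``the unique component realizing the universal cover of $\PP^+(A)$.'' Nothing in part (1) gives that uniqueness: (1) describes the component $\Stab^\dagger(A)$ but does not rule out other connected components of $\Stab(A)$ mapping onto $\PP^+(A)$ (connectedness of $\Stab(A)$ for an abelian surface was open at the time of \cite{Bri2}, and is not used there). So from $\delta(\Phi)\in\Aut^+H^*(A;\Z)$ you only get that $\Phi(\Stab^\dagger(A))$ is \emph{some} component over $\PP^+(A)$, not that it equals $\Stab^\dagger(A)$. Moreover the containment $\delta(\Aut(\DD_A))\subseteq\Aut^+H^*(A;\Z)$, which you also need for exactness in (4), is itself a theorem, not a formality --- for K3 surfaces it is the difficult orientation result of Huybrechts--Macr\`i--Stellari --- and your sketch leaves it unproven; for abelian surfaces it is verified on the explicit generators furnished by Orlov's exact sequence. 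Bridgeland's actual proof of (2) runs through precisely this classification: modulo shifts, line-bundle twists and automorphisms, which visibly preserve the geometric chamber, an arbitrary autoequivalence is reduced to ones whose effect on a single geometric stability condition can be computed directly (skyscrapers are sent to semistable semi-homogeneous objects), producing one point of $\Stab^\dagger(A)$ whose image remains in $\Stab^\dagger(A)$; connectedness then propagates this to the whole component. To repair your argument you would either have to supply this reduction or independently prove that $\Stab^\dagger(A)$ is the only component over $\PP^+(A)$, neither of which is in your sketch.
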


Following our proposal in the previous section, it is natural to consider the following subset of $\Stab^\dagger(A)$.
$$
\Stab_{\NN}^+(A):=\{ (\ZZ,\PP)\in\Stab^\dagger(A) \ | \ \mathfrak{b}(\ZZ,\ZZ)=0, \  -\mathfrak{b}(\ZZ,\overline{\ZZ})>0\}
$$
(c.f. Definition \ref{Stab+}).
Hence we need to compute the bilinear form $\mathfrak{b}$.
We start with a lemma.

\begin{Lem}\label{LemMuk}
Let $X$ be a smooth projective variety of dimension $n$. 
Recall the twisted Mukai vector $v_X(E)=\ch(E)\sqrt{\Td_X}\exp(\sqrt{-1}\Lambda_X)$. 
Then
\begin{enumerate}
\item Assume that $X$ is Calabi--Yau, then $\langle v, w\rangle_{\Muk}=(-1)^n\langle w,v\rangle_{\Muk}$ for any $v,w\in H^{2*}(X;\C)$.
\item 
Let $\{E_i\}$ be a basis of the numerical Grothendieck group $\NN(D_X)$. Then
$$
\sum_{i,j} \langle v, v_X(E_i)\rangle _{\Muk}\cdot\chi^{i,j}\cdot\langle v_X(E_j),w\rangle_{\Muk}=\langle v,w\rangle_{\Muk},
$$
where $(\chi^{i,j})=(\chi(E_i,E_j))^{-1}$. 
	\end{enumerate}
\end{Lem}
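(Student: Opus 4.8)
The plan is to handle the two parts separately, as they are of quite different character. For part (1) I would reduce to a direct cohomological computation. Since $X$ is Calabi--Yau we have $c_1(X)=0$, so the factor $e^{c_1(X)/2}$ in the Mukai pairing disappears and $\langle v,w\rangle_{\Muk}=\int_X v^\vee\cdot w$. Writing $v=\sum_p v_p$ and $w=\sum_q w_q$ with $v_p,w_q$ of cohomological degree $2p,2q$, only the terms with $p+q=n$ survive integration over $X$. In $\langle v,w\rangle_{\Muk}$ the dual sits on $v$ and contributes the sign $(-1)^p$, whereas in $\langle w,v\rangle_{\Muk}$ the dual sits on $w$ and contributes $(-1)^q=(-1)^{n-p}=(-1)^n(-1)^p$; since $v_p$ and $w_q$ both have even cohomological degree they commute under the cup product, and comparing the two pairings term by term extracts exactly the global factor $(-1)^n$. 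This is the only place where the parity of $n$ enters, which matches the stated (skew-)symmetry.

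For part (2) I would first invoke the compatibility of the twisted Mukai pairing with the Euler form recorded earlier, namely $\chi(E_i,E_j)=\langle v_X(E_i),v_X(E_j)\rangle_{\Muk}$, so that the Gram matrix of the Mukai pairing on the vectors $\{v_X(E_i)\}$ is precisely $(\chi(E_i,E_j))$, whose inverse is $(\chi^{i,j})$ by definition. By construction of the numerical Grothendieck group the Euler form is non-degenerate on $\NN(\DD_X)$, so this Gram matrix is invertible and the $\{v_X(E_i)\}$ form a basis of the subspace $W:=\NN(\DD_X)_\C\subset H^{2*}(X;\C)$ on which the identity is to be read (in the applications both $v$ and $w$ are of the form $\exp(\omega)\in W$).

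The key observation is then that the operator $P(w):=\sum_{i,j} v_X(E_i)\,\chi^{i,j}\,\langle v_X(E_j),w\rangle_{\Muk}$ is the Mukai-orthogonal projection of $H^{2*}(X;\C)$ onto $W$. To verify this I would evaluate $P$ on a basis vector $v_X(E_k)$: the inner sum collapses via $\sum_j \chi^{i,j}\langle v_X(E_j),v_X(E_k)\rangle_{\Muk}=\sum_j\chi^{i,j}\chi(E_j,E_k)=\delta^i_k$, giving $P(v_X(E_k))=v_X(E_k)$, so $P$ restricts to the identity on $W$. Pairing the relation $P(w)=w$ (valid for $w\in W$) against $v$ on the left and expanding reproduces exactly the claimed identity.

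In terms of difficulty, neither part presents a genuine obstacle: part (1) is sign bookkeeping under the Mukai dual, and part (2) is a resolution-of-the-identity argument. The only subtle point, which I would flag explicitly, is that the identity in part (2) is \emph{not} valid for arbitrary $v,w\in H^{2*}(X;\C)$ but only for $v,w$ in the algebraic span $\NN(\DD_X)_\C$; for $w$ in the Mukai-orthogonal complement the left-hand side vanishes while $\langle v,w\rangle_{\Muk}$ need not. The non-degeneracy of $\chi$ on $\NN(\DD_X)$, which supplies the invertibility underlying the whole statement, is precisely the property built into the definition of the numerical Grothendieck group.
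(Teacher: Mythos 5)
Your proof is correct. On part (2) it is exactly the argument the paper compresses into one line: the compatibility $\chi(E_i,E_j)=\langle v_X(E_i),v_X(E_j)\rangle_{\Muk}$ makes $(\chi(E_i,E_j))$ the Gram matrix of the Mukai pairing on the vectors $v_X(E_i)$, and your operator $P$ with $\sum_j\chi^{i,j}\chi(E_j,E_k)=\delta^i_k$ is the resolution of the identity that the paper calls ``a simple linear-algebraic fact.'' On part (1) you take a genuinely different route: the paper invokes Serre duality, whereas you compute directly, using $c_1(X)=0$ to kill the factor $e^{c_1(X)/2}$ and comparing the sign $(-1)^p$ contributed by the Mukai dual on $H^{2p}$ with $(-1)^q=(-1)^{n-p}=(-1)^n(-1)^p$ on the complementary degree. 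Your computation is arguably better matched to the statement as written: Serre duality yields the $(-1)^n$-symmetry of the Euler form, hence of the Mukai pairing only on the span of the vectors $v_X(E)$, while the lemma asserts it for \emph{all} $v,w\in H^{2*}(X;\C)$, and your sign bookkeeping delivers precisely that generality. Your closing caveat is also a genuine sharpening absent from the paper: the identity in (2) cannot hold for arbitrary $v,w\in H^{2*}(X;\C)$, since for $w$ Mukai-orthogonal to the span of the $v_X(E_i)$ (e.g.\ a transcendental class on a K3 or abelian surface) the left-hand side vanishes while $\langle v,w\rangle_{\Muk}$ need not; in fact your own argument shows it suffices that \emph{one} of the two arguments lie in the span (pair $P(w)=w$ against an arbitrary $v$, using that left and right complements agree by part (1)), which is slightly stronger than the restriction you state. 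This is precisely the reading under which Lemma \ref{b-mho} is subsequently applied, since central charges of numerical stability conditions pair against classes in $\NN(\DD_A)_\C$, so the caveat costs the paper nothing --- but it deserves to be flagged, as you do.
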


\begin{proof}
The first assertion follows directly from the Serre duality.
The second assertion is a simple linear-algebraic fact which follows from the identity $\chi(E_i,E_j)=\langle v_X(E_i),v_X(E_j)\rangle_{\Muk}$. 
\end{proof}

Note that Lemma \ref{LemMuk} is purely algebraic and thus holds in a categorical setting as well.

We now compute the bilinear form $\mathfrak{b}$, with central charge of the form 
$$
\ZZ_{\mho}(E)=-\langle\mho, v_X(E)\rangle_{\Muk},
$$
where $\mho\in H^*(X;\C)$.

\begin{Lem}\label{b-mho}
$
\mathfrak{b}(\ZZ_{\mho_1},\ZZ_{\mho_2})=(-1)^n\langle\mho_1,\mho_2\rangle_{\Muk}.
$
\end{Lem}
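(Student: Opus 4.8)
The plan is to unwind the definition of $\mathfrak{b}$ and reduce the claim entirely to the two parts of Lemma \ref{LemMuk}; no geometry beyond those identities should be needed. First I would fix a basis $\{E_i\}$ of $\NN(\DD_X)$ and expand
$$
\mathfrak{b}(\ZZ_{\mho_1},\ZZ_{\mho_2})=\sum_{i,j}\chi^{i,j}\,\ZZ_{\mho_1}(E_i)\,\ZZ_{\mho_2}(E_j),
$$
then substitute the central charge $\ZZ_{\mho}(E)=-\langle\mho,v_X(E)\rangle_{\Muk}$. The two minus signs multiply to $+1$, leaving
$$
\mathfrak{b}(\ZZ_{\mho_1},\ZZ_{\mho_2})=\sum_{i,j}\langle\mho_1,v_X(E_i)\rangle_{\Muk}\,\chi^{i,j}\,\langle\mho_2,v_X(E_j)\rangle_{\Muk}.
$$
Here I would note that the index convention for $\chi^{i,j}$ in the definition of $\mathfrak{b}$ agrees exactly with that in Lemma \ref{LemMuk}, so the matrix entries line up without any transpose issue.

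The next step is to massage this into the precise shape consumed by Lemma \ref{LemMuk}(2), whose summand has the \emph{second} Mukai pairing written as $\langle v_X(E_j),w\rangle_{\Muk}$ rather than $\langle w,v_X(E_j)\rangle_{\Muk}$. To fix this I would apply the Calabi--Yau symmetry of Lemma \ref{LemMuk}(1) to the rightmost factor, rewriting $\langle\mho_2,v_X(E_j)\rangle_{\Muk}=(-1)^n\langle v_X(E_j),\mho_2\rangle_{\Muk}$, and then pull the global sign $(-1)^n$ outside the double sum. At that point the summand reads $\langle\mho_1,v_X(E_i)\rangle_{\Muk}\,\chi^{i,j}\,\langle v_X(E_j),\mho_2\rangle_{\Muk}$, which is literally the left-hand side of Lemma \ref{LemMuk}(2) with $v=\mho_1$ and $w=\mho_2$. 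Invoking that identity collapses the sum to $\langle\mho_1,\mho_2\rangle_{\Muk}$, yielding the claimed formula $\mathfrak{b}(\ZZ_{\mho_1},\ZZ_{\mho_2})=(-1)^n\langle\mho_1,\mho_2\rangle_{\Muk}$.

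I do not anticipate a genuine obstacle: the argument is pure bookkeeping once the two lemmas are in hand. The only points demanding care are the sign accounting — tracking that the two central-charge minus signs cancel while the Calabi--Yau flip contributes the surviving $(-1)^n$ — and confirming that the Mukai slots are in the correct order before applying Lemma \ref{LemMuk}(2). Since Lemma \ref{LemMuk} is stated to hold in the categorical setting as well, the same proof establishes the identity abstractly, not just for $\DD_X=\mathrm{D^bCoh}(X)$.
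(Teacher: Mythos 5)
Your proposal is correct and follows essentially the same route as the paper's own proof: expand $\mathfrak{b}$ in the basis, substitute $\ZZ_{\mho}(E)=-\langle\mho,v_X(E)\rangle_{\Muk}$ so the two minus signs cancel, flip the second Mukai slot via Lemma \ref{LemMuk}(1) to extract the global $(-1)^n$, and collapse the double sum with Lemma \ref{LemMuk}(2). The sign bookkeeping and the slot-order check you flag are exactly the only points of care in the paper's three-line computation as well.
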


\begin{proof}
This is a simple application of Lemma \ref{LemMuk}. We have
\begin{align}
\mathfrak{b}(\ZZ_{\mho_1},\ZZ_{\mho_2}) 
&= \sum_{i,j} \chi^{i,j} \cdot \langle \mho_1, v_X(E_i)\rangle_{\Muk} \cdot \langle \mho_2, v_X(E_j)\rangle_{\Muk} \notag \\
&= (-1)^{n}\sum_{i,j} \chi^{i,j} \cdot \langle \mho_1, v_X(E_i)\rangle_{\Muk}\cdot \langle v_X(E_j), \mho_2\rangle_{\Muk}  \notag \\
&=(-1)^n\langle\mho_1,\mho_2\rangle_{\Muk}. \notag
\end{align}
\end{proof}

Using Lemma \ref{b-mho}, we can determine which stability conditions lie in the subset $\Stab_{\NN}^+(A) \subset \Stab_{\NN}(A)$.

\begin{Prop}\label{WPdomain}
A stability condition $(\ZZ_{\mho},\PP)\in\Stab^\dagger(A)$ lies in $\Stab_{\NN}^+(A)$ if and only if $\langle\mho,\mho\rangle=0$ and $-\langle\mho,\overline{\mho}\rangle>0$.
This condition is equivalent to that $\mho$ is of the form $\mho_{\omega}=c\exp(\omega)$ for some constant $c\in\C$ and complexified K\"ahler class $\omega$.
\end{Prop}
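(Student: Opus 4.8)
The plan is to reduce both assertions to the Mukai pairing via the lemmas already established and then recognise the resulting inequalities as the classical period-domain conditions on an abelian surface. Since $n=2$ here, the first thing I would record is that $(\sqrt{-1})^{-n}=-1$, and that $\overline{\ZZ_\mho}=\ZZ_{\overline{\mho}}$: indeed $v_A(E)=\ch(E)$ is a real even-degree class, and the Mukai pairing is $\C$-bilinear and commutes with complex conjugation on even classes, so $\overline{\langle\mho,\ch(E)\rangle_{\Muk}}=\langle\overline{\mho},\ch(E)\rangle_{\Muk}$. Combined with Lemma \ref{b-mho}, which gives $\mathfrak{b}(\ZZ_{\mho_1},\ZZ_{\mho_2})=\langle\mho_1,\mho_2\rangle_{\Muk}$ for $n=2$, the two defining conditions of $\Stab_\NN^+(A)$ translate immediately into $\langle\mho,\mho\rangle_{\Muk}=0$ and $-\langle\mho,\overline{\mho}\rangle_{\Muk}>0$. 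This settles the first (``if and only if'') assertion essentially for free.

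For the second assertion I would work with $x:=\Re\mho$ and $y:=\Im\mho$ in $\NN(\DD_A)_\R$. Using that the Mukai pairing is symmetric for $n$ even (Lemma \ref{LemMuk}(1)), the two conditions unwind to $\langle x,y\rangle_{\Muk}=0$, $\langle x,x\rangle_{\Muk}=\langle y,y\rangle_{\Muk}$, and $\langle x,x\rangle_{\Muk}<0$; that is, $x$ and $y$ span a negative-definite $2$-plane on which the pairing is, up to scale, standard. The easy direction ($\Leftarrow$) is a one-line computation: for $\mho=c\exp(\omega)$ with $\omega=B+\sqrt{-1}\kappa$ one finds $\langle\exp\omega,\exp\omega\rangle_{\Muk}=0$ and $-\langle\exp\omega,\exp\overline{\omega}\rangle_{\Muk}=2\int_A\kappa^2>0$, and both conditions are preserved under the scaling $\mho\mapsto c\mho$ for $c\ne 0$.

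The substantive direction is ($\Rightarrow$). Writing $\mho=\mho_0+\mho_2+\mho_4$ according to $H^0\oplus\NS(A)_\C\oplus H^4$, I would first show $\mho_0\neq 0$: if $\mho_0=0$, the conditions force $\Re\mho_2$ and $\Im\mho_2$ to be two orthogonal classes in $\NS(A)_\R$ of strictly positive self-intersection, which is impossible since the intersection form has signature $(1,\rho-1)$ by the Hodge index theorem (the orthogonal complement of a positive class is negative definite). Once $\mho_0\neq 0$, I rescale by $c:=\mho_0$ and write $\mho=c(1+\omega'+s)$ with $\omega'\in\NS(A)_\C$ and $s\in\C$; the single equation $\langle\mho,\mho\rangle_{\Muk}=0$ then forces $s=\tfrac12(\omega')^2$, i.e. $\mho=c\exp(\omega')$. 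Finally $-\langle\mho,\overline{\mho}\rangle_{\Muk}>0$ yields $\int_A(\Im\omega')^2>0$, so $\Im\omega'$ lies in the positive cone of $\NS(A)_\R$, which (since $A$ carries no rational curves, as in Proposition \ref{identify}) is the disjoint union $\mathcal K\sqcup(-\mathcal K)$ of the Kähler cone and its negative. A connectedness argument then identifies which component of $\PP(A)$ one lands in: the set $\{(c,\omega'): c\neq 0,\ \Im\omega'\in\mathcal K\}$ is connected, its image under $c\exp(\omega')$ is connected and contains the distinguished vectors $\exp(\omega)$ defining $\PP^+(A)$, hence lies in $\PP^+(A)$, while the opposite choice lands in $\PP^-(A)$. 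Since $\mho\in\PP^+(A)$ by Theorem \ref{BriK3}(1), we get $\Im\omega'\in\mathcal K$, so $\omega'$ is a genuine complexified Kähler class.

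I expect the only real obstacle to be the nonvanishing of $\mho_0$ together with the final component bookkeeping. The Hodge-index step must be set up with the correct signature $(1,\rho-1)$ of the intersection form on $\NS(A)_\R$, and the passage from ``$\Im\omega'$ in the positive cone'' to ``$\Im\omega'$ in the Kähler cone'' rests on correctly matching the two connected components of $\PP(A)$ with the two components of the positive cone via the \emph{defining} property of $\PP^+(A)$ as the component containing the $\exp(\omega)$'s. By contrast, the exponential-form manipulation after scaling is entirely routine.
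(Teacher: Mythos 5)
Your proposal is correct and follows essentially the same route as the paper: the first assertion is read off from Lemma \ref{b-mho} (noting $(\sqrt{-1})^{-2}=-1$ and $\overline{\ZZ_{\mho}}=\ZZ_{\overline{\mho}}$ since $v_A(E)=\ch(E)$ is real), and the second is precisely the ``explicit calculation of the Mukai pairing on surfaces'' that the paper invokes without detail. Your additional steps --- the Hodge-index argument forcing $\mho_0\neq 0$, solving $\langle\mho,\mho\rangle_{\Muk}=0$ to get the exponential form, and the connectedness bookkeeping that uses $\mho\in\PP^+(A)$ to select the K\"ahler-cone component --- correctly fill in exactly the details the paper's one-line proof leaves implicit.
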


\begin{proof}
The first assertion follows directly from Lemma \ref{b-mho}.
The second assertion follows from an explicit calculation of the Mukai pairing on surfaces.
\end{proof}

Note that central charge of the form
$$
\ZZ_{\mho_{\omega}}(E)=-\langle\mho_{\omega}, v_A(E)\rangle_{\Muk}
$$
where $\mho_{\omega}=\exp(\omega)$ for a complexified K\"ahler class $\omega$, has been discussed in physics literatures, see Section \ref{phyexp}.
One can prove that a stability condition with such a central charge always lies in $\Stab^+_{\NN}(\DD)$.

\begin{Prop}\label{WP_B}
\begin{enumerate}
\item The central charge $\ZZ_{\mho_\omega}$ satisfies
$$
\mathfrak{b}(\ZZ_{\mho_\omega},\ZZ_{\mho_\omega})=0, \ \ (\sqrt{-1})^{-n}\mathfrak{b}(\ZZ_{\mho_\omega},\overline{\ZZ_{\mho_\omega}})>0.
$$ 
\item 
The Weil--Petersson potential at a stability condition with central charge $\ZZ_{\mho_\omega}$ is given by
$$
K_{\WP}(\omega)=-\log(\Im(\omega)^n)-\log\Big(\frac{2^n}{n!}\Big).
$$
\end{enumerate}
\end{Prop}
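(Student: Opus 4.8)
The plan is to reduce both statements to the Mukai pairing by means of Lemma \ref{b-mho}, and then to perform a direct computation with $\mho_\omega=\exp(\omega)$, where $\omega=B+\sqrt{-1}\,\Im(\omega)$ and $\Im(\omega)$ is a Kähler class. The only input from the geometry of the surface is that the twisted Mukai vector $v_A(E)=\ch(E)\sqrt{\Td_A}$ is real (for the abelian surface $\Td_A=1$ and $\Lambda_A=0$); everything else is sign bookkeeping.

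First, for part (1), I would record the elementary fact that the Mukai dual of $\exp(\omega)$ is $\exp(-\omega)$: since $\omega\in H^2$, the degree-$2k$ part of $\exp(\omega)$ is $\omega^k/k!$, and dualization multiplies it by $(\sqrt{-1})^{2k}=(-1)^k$, giving $\sum_k(-\omega)^k/k!=\exp(-\omega)$. Applying Lemma \ref{b-mho},
$$
\mathfrak{b}(\ZZ_{\mho_\omega},\ZZ_{\mho_\omega})=(-1)^n\langle\exp(\omega),\exp(\omega)\rangle_{\Muk}=(-1)^n\int_X\exp(-\omega)\exp(\omega)=(-1)^n\int_X 1=0,
$$
because $\int_X$ extracts the top-degree component while $\exp(-\omega)\exp(\omega)=1$ has none in positive degree. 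This gives the first identity. For the second, the key preliminary observation is that $\overline{\ZZ_{\mho_\omega}}=\ZZ_{\overline{\mho_\omega}}$: on even-degree classes the dualization $(-)^\vee$ commutes with complex conjugation, and since $v_A(E)$ is real, conjugating $\ZZ_{\mho_\omega}(E)=-\langle\mho_\omega,v_A(E)\rangle_{\Muk}$ simply conjugates $\mho_\omega$. Writing $\overline{\mho_\omega}=\exp(\bar\omega)$ with $\bar\omega-\omega=-2\sqrt{-1}\,\Im(\omega)$, Lemma \ref{b-mho} then yields
$$
\mathfrak{b}(\ZZ_{\mho_\omega},\overline{\ZZ_{\mho_\omega}})=(-1)^n\langle\exp(\omega),\exp(\bar\omega)\rangle_{\Muk}=(-1)^n\int_X\exp(-2\sqrt{-1}\,\Im(\omega)).
$$
Extracting the top-degree term gives $(-1)^n\frac{(-2\sqrt{-1})^n}{n!}\int_X\Im(\omega)^n$, and multiplying by $(\sqrt{-1})^{-n}$ collapses the signs via $(\sqrt{-1})^{-n}(-1)^n(-2\sqrt{-1})^n=2^n$ to $\frac{2^n}{n!}\int_X\Im(\omega)^n$, which is strictly positive because $\Im(\omega)$ is Kähler.

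Part (2) is then immediate. Substituting this value into Definition \ref{WPpotential},
$$
K_{\WP}(\omega)=-\log\Big((\sqrt{-1})^{-n}\mathfrak{b}(\ZZ_{\mho_\omega},\overline{\ZZ_{\mho_\omega}})\Big)=-\log\Big(\tfrac{2^n}{n!}\Im(\omega)^n\Big),
$$
and splitting the logarithm produces the stated formula. I would note that the central charge $\ZZ_{\mho_\omega}$ indeed lies in $\Stab^+_\NN(A)$ by part (1) together with Proposition \ref{WPdomain}, so the potential is being evaluated at a genuine point of the period domain.

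The computations are routine, so the only genuinely delicate points are the two I would isolate as lemmas: the sign collapse $(\sqrt{-1})^{-n}(-1)^n(-2\sqrt{-1})^n=2^n$, where one must correctly juggle the factor from dualization, the $(-1)^n$ from Lemma \ref{b-mho}, the top-degree factor $(-2\sqrt{-1})^n$, and the normalization $(\sqrt{-1})^{-n}$ so that the answer is a manifestly positive real multiple of the volume $\int_X\Im(\omega)^n$; and the conjugation identity $\overline{\ZZ_{\mho_\omega}}=\ZZ_{\overline{\mho_\omega}}$, which is the one place where the reality of $v_A$ is used (it is automatic for a Calabi--Yau surface, where $\Lambda_X=0$, but would require an extra argument if one wished to extend the statement to $n\ge 3$ with a nonzero log Gamma class). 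Neither is a real obstacle, so I expect the proof to be short.
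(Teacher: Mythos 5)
Your proof is correct and follows essentially the same route as the paper: both apply Lemma \ref{b-mho} to reduce $\mathfrak{b}$ to the Mukai pairing of exponentials, extract the top-degree term $\frac{(\bar\omega-\omega)^n}{n!}$, and collapse the signs to obtain $\frac{2^n}{n!}\Im(\omega)^n$, with the vanishing $\mathfrak{b}(\ZZ_{\mho_\omega},\ZZ_{\mho_\omega})=0$ obtained by the same substitution $\bar\omega\mapsto\omega$. Your only addition is to make explicit the conjugation identity $\overline{\ZZ_{\mho_\omega}}=\ZZ_{\overline{\mho_\omega}}$ and its dependence on the reality of $v_X(E)$ (i.e.\ on $\Lambda_X=0$), a step the paper leaves implicit and a caveat that is indeed relevant if one tried to use the formula verbatim for Calabi--Yau manifolds with nontrivial log Gamma class.
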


\begin{proof}
\begin{enumerate}
\item By applying Lemma \ref{b-mho}, we have
\begin{align}
(\sqrt{-1})^{-n}\mathfrak{b}(\ZZ_{\mho_\omega},\overline{\ZZ_{\mho_\omega}}) 
&=(\sqrt{-1})^{n}\langle \mho_\omega, \mho_{\overline{\omega}}\rangle_{\Muk} \notag\\
&=  \frac{(\sqrt{-1})^n}{n!}(-\omega+\overline{\omega})^n  \notag \\
&= \frac{2^n}{n!}\Im(\omega)^n>0  \notag.
\end{align}
The last inequality is a consequence of the fact that $\omega$ is a complexified K\"ahler class. 
The equality $\mathfrak{b}(\ZZ_{\mho_\omega},\ZZ_{\mho_\omega})=0$ also follows from the above by replacing $\overline{\ZZ_{\mho_\omega}}$ by $\ZZ_{\mho_\omega}$. 
\item The assertion follows from the above explicit computation. 
\end{enumerate}
\end{proof}

Motivated by Proposition \ref{WPdomain}, we define the following subset $\RR^+(A)$ of $\PP^+(A)$.
$$
\RR^+(A):=\{\mho\in\PP^+(A)\ |\ \ \langle \mho, \mho \rangle_{\Muk}=0, \ -\langle \mho, \overline{\mho} \rangle_{\Muk}>0 \}.
$$
By Theorem \ref{BriK3} (1), the forgetful map $\Stab^+_\NN(\DD_A)\rightarrow\RR^+(A)$ is a covering map 
with the group of deck transformation generated by the double shift functor $[2] \in \Aut(\DD_A)$.

\begin{Lem} \label{Tube domain}
$\RR^+(A)/\C^\times \cong \mathfrak{H}_2$ as a complex manifold. 
Thereby we have an identification
$$
\langle[2]\rangle\backslash\Stab^+_\NN(\DD_A)/\C^\times\cong\mathfrak{H}_2.
$$
\end{Lem}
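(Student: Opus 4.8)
The plan is to reduce the first identification to the explicit exponential parametrization of $\RR^+(A)$ supplied by Proposition \ref{WPdomain}, then match the resulting space of complexified K\"ahler classes with $\mathfrak{H}_2$ via Proposition \ref{identify}, and finally deduce the identification on the $\Stab$ side formally from the covering map property recorded just before the lemma. The genuine content is the clean description of the $\C^\times$-quotient of $\RR^+(A)$; the rest is bookkeeping.

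First I would recall from Proposition \ref{WPdomain} that a vector $\mho\in\PP^+(A)$ lies in $\RR^+(A)$ exactly when $\mho=c\exp(\omega)$ for some $c\in\C$ and some complexified K\"ahler class $\omega\in\NS(A)_\C$, so that $\RR^+(A)=\C^\times\cdot\exp(\{\omega\})$ and the $\C^\times$-action is simply rescaling inside $\NN(\DD_A)_\C$. To compute the quotient I would use the degree-zero (rank) component as a normalization: on a surface $\exp(\omega)=1+\omega+\tfrac12\,\omega\cdot\omega$ has degree-zero component $1$, hence the degree-zero component of $c\exp(\omega)$ equals $c$, which is nonzero since $\mho\neq0$ spans a two-plane in $\PP^+(A)$. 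Therefore every $\C^\times$-orbit in $\RR^+(A)$ contains the \emph{unique} representative $\exp(\omega)$ with rank $1$, and reading off the degree-two component recovers $\omega$. Since $\omega\mapsto\exp(\omega)$ is injective on $H^2$, this gives a bijection between $\RR^+(A)/\C^\times$ and the set of complexified K\"ahler classes; the normalization $\mho\mapsto\mho/(\text{rank})$ followed by projection to $H^2$ is holomorphic with holomorphic inverse $\omega\mapsto[\exp(\omega)]$, so the bijection is a biholomorphism.

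Next I would invoke Proposition \ref{identify} in the case $g=2$: the assignment $\omega\mapsto M_\omega$ identifies the set of complexified K\"ahler classes with $\mathfrak{H}_2$, and this map is affine-linear in the coordinates $(\rho,\sigma,\tau)$, hence a biholomorphism onto $\mathfrak{H}_2$. Composing with the previous step yields $\RR^+(A)/\C^\times\cong\mathfrak{H}_2$. For the second identification I would use the covering map $\pi\colon\Stab^+_\NN(\DD_A)\to\RR^+(A)$ with deck group $\langle[2]\rangle$ coming from Theorem \ref{BriK3}(1), so that $\langle[2]\rangle\backslash\Stab^+_\NN(\DD_A)\cong\RR^+(A)$. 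Because the left $\Aut(\DD_A)$-action (to which $[2]$ belongs) commutes with the right $\widetilde{\GL^+(2,\R)}$-action, and in particular with its $\C^\times$-subgroup, the map $\pi$ is $\C^\times$-equivariant once the $\C^\times$-action on the base is identified with rescaling of $\mho$ (which is precisely how $\C^\times$ acts on central charges $\ZZ_\mho$). Taking the further quotient by $\C^\times$ on both sides then gives $\langle[2]\rangle\backslash\Stab^+_\NN(\DD_A)/\C^\times\cong\RR^+(A)/\C^\times\cong\mathfrak{H}_2$.

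The main obstacle is the first step: exhibiting the exponential locus as a global holomorphic slice for the scaling action and verifying that normalization by the rank component is well defined (which needs $c\neq0$) and biholomorphic. Once this is in place the remaining steps are formal, provided one checks that the two quotients, by $\langle[2]\rangle$ and by $\C^\times$, commute; this is guaranteed by the commutativity of the left and right group actions on $\Stab_\NN(\DD_A)$.
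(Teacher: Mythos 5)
Your proposal is correct and is essentially the paper's own argument: the paper identifies $\RR^+(A)/\C^\times$ with the type $\mathrm{IV}_3$ symmetric domain and invokes the standard isomorphism $\mathrm{IV}_3\cong\mathrm{III}_2$, realized explicitly by the tube domain map $\omega\mapsto[\mho_\omega]$ --- which is precisely the normalization-by-rank slice you construct by hand from Propositions \ref{WPdomain} and \ref{identify}, so you have simply unpacked the ``standard'' identification rather than citing it. Your derivation of the second identification from the $\C^\times$-equivariance of the covering $\pi$ with deck group $\langle[2]\rangle$ likewise matches what the paper records immediately before the lemma.
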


\begin{proof}
The quotient
$$
\RR^+(A)/\C^\times \cong \{ \C \mho \in \mathbb{P}(\NN(\DD_A)_\C) \ | \ \langle \mho, \mho \rangle_{\Muk}=0, \ -\langle \mho, \overline{\mho} \rangle_{\Muk}>0 \}
$$
is the symmetric domain of type $\mathrm{IV}_3$.
The assertion follows from the standard identification $\mathrm{IV}_3 \cong \mathrm{III}_{2}$ of the symmetric domains. 
More explicitly, it is given by the tube domain realization $\mathfrak{H}_2 \rightarrow \RR^+(A)/\C^\times: \omega \mapsto [\mho_\omega]$. 
\end{proof}

We now recall the definition of \emph{Calabi--Yau autoequivalences} following the work of Bayer and Bridgeland \cite{BB}.
Define
$$
\Aut_{\CY}^+H^*(A)\subset\Aut^+H^*(A)
$$
to be the subgroup of Hodge isometries which preserve the class of holomorphic 2-form $[\Omega]\in\mathbb{P}H^*(A;\C)$.
Any such isometry restricts to give an isometry of $\NN(\DD_A)$.
In fact,
$$
\Aut_{\CY}^+H^*(A)\subset\Aut\NN(\DD_A)
$$
is the subgroup of index two which do not exchange the two components of $\PP(A)$.

An autoequivalence $\Phi\in\Aut(\DD_A)$ is said to be \emph{Calabi--Yau} if the induced Hodge isometry $\delta(\Phi)$ lies in $\Aut_{\CY}^+H^*(A)$.
We denote $\Aut_\CY(\DD_A)\subset\Aut(\DD_A)$ the group of Calabi--Yau autoequivalences.
By Theorem \ref{BriK3} (4), there exists a short exact sequence
$$
1\longrightarrow\Aut^0(\DD_A)\longrightarrow\Aut_\CY(\DD_A)\longrightarrow
\Aut_\CY^+H^*(A)\longrightarrow1.
$$

We write $\Aut_{\mathrm{tri}}^0(\DD_A)\subset\Aut^0(\DD_A)$ for the subgroup generated by twists by elements of $\mathrm{Pic}^0(A)$ and pullbacks by automorphisms of $A$ acting trivially on $H^*(A;\Z)$.
Recall from Theorem \ref{BriK3} (3) that $\Aut_{\mathrm{tri}}^0(\DD_A)$ acts trivially on $\Stab^\dagger(\DD_A)$.
We define
$$
\overline{\Aut}_\CY(\DD_A):=\Aut_\CY(\DD_A)/\Aut_{\mathrm{tri}}^0(\DD_A).
$$
Then $\overline{\Aut}_\CY(\DD_A)$ acts on $\Stab^+_\NN(\DD_A)$,  and there is a short exact sequence
$$
1\longrightarrow\langle[2]\rangle\longrightarrow\overline{\Aut}_\CY(\DD_A)\longrightarrow
\Aut_\CY^+H^*(A)\longrightarrow1.
$$

\begin{Thm}\label{Stab-Siegel}
The covering map $\pi$ induces an isomorphism
$$
\overline{\Aut}_\CY(\DD_A)\backslash\Stab^+_\NN(\DD_A)/\C^\times\cong
\mathrm{Sp}(4,\Z) \backslash \mathfrak{H}_2
$$
between the double quotient of $\Stab^\dagger_\NN(\DD_A)$ and the Siegel modular variety $\mathrm{Sp}(4,\Z) \backslash \mathfrak{H}_2$. 
We will call it the stringy K\"ahler moduli space of $A$.
\end{Thm}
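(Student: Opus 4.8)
The plan is to compute the double quotient in two stages, exploiting the short exact sequence
$$
1\longrightarrow\langle[2]\rangle\longrightarrow\overline{\Aut}_\CY(\DD_A)\longrightarrow
\Aut_\CY^+H^*(A)\longrightarrow1,
$$
which exhibits $\langle[2]\rangle$ as a \emph{normal} subgroup. First I would quotient $\Stab^+_\NN(\DD_A)/\C^\times$ by the inner subgroup $\langle[2]\rangle$ alone. By Lemma \ref{Tube domain} this inner quotient is exactly $\mathfrak{H}_2$, the identification being the tube-domain map $\omega\mapsto[\mho_\omega]$. Since $\langle[2]\rangle$ is normal and the $\C^\times$-action commutes with the autoequivalence action, quotient-in-stages gives
$$
\overline{\Aut}_\CY(\DD_A)\backslash\Stab^+_\NN(\DD_A)/\C^\times
\cong
\Aut_\CY^+H^*(A)\backslash\mathfrak{H}_2,
$$
where the residual group $\Aut_\CY^+H^*(A)$ acts on $\mathfrak{H}_2$ through its induced action on $\RR^+(A)/\C^\times$.

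The essential remaining task is to identify this residual action with the standard $\mathrm{Sp}(4,\Z)$-action on the Siegel upper half-space. By definition $\Aut_\CY^+H^*(A)$ is the index-two subgroup of $\Aut\NN(\DD_A)$ that does not exchange the two components of $\PP(A)$, acting by isometries on the signature-$(3,2)$ lattice $\NN(\DD_A)\cong U^{\oplus 2}\oplus\langle 2\rangle$. I would invoke the classical exceptional isomorphism $\mathrm{Spin}(3,2)\cong\mathrm{Sp}(4,\R)$ in its integral form: letting $V=\Z^4$ carry the standard symplectic form, $\mathrm{Sp}(4,\Z)$ acts on $\wedge^2 V$ fixing the class of the symplectic form, and the orthogonal complement $\wedge^2_0 V$ of that class is a rank-five lattice of signature $(3,2)$ under the induced pairing. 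The linchpin is the lattice isomorphism $\wedge^2_0 V\cong U^{\oplus 2}\oplus\langle 2\rangle\cong\NN(\DD_A)$, under which the $\mathrm{Sp}(4,\Z)$-action becomes an action by isometries. As the center $\{\pm I\}$ acts trivially on $\wedge^2 V$, this produces a homomorphism $\mathrm{Sp}(4,\Z)/\{\pm I\}\to\Aut\NN(\DD_A)$ whose image one must show equals precisely $\Aut_\CY^+H^*(A)$.

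It then remains to check that the two actions on the common symmetric domain agree. Concretely, the fractional-linear action $M\mapsto(AM+B)(CM+D)^{-1}$ of $\mathrm{Sp}(4,\Z)$ on $\mathfrak{H}_2$ must correspond, under the tube-domain realization $\omega\mapsto[\mho_\omega]$, to the orthogonal action on $\RR^+(A)/\C^\times$. This is exactly the equivariant refinement of the identification $\mathrm{III}_2\cong\mathrm{IV}_3$ already used in Lemma \ref{Tube domain}, so it suffices to trace that isomorphism through while keeping track of the group actions. Combining the two stages then yields the asserted isomorphism.

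I expect the main obstacle to be the precise matching of the arithmetic groups, namely proving that the image of $\mathrm{Sp}(4,\Z)/\{\pm I\}$ is the \emph{full} group $\Aut_\CY^+H^*(A)$, rather than a proper finite-index subgroup. This demands both surjectivity of $\mathrm{Sp}(4,\Z)$ onto the relevant orthogonal group $\mathrm{O}^+(\NN(\DD_A))$ and the verification that $\Aut_\CY^+H^*(A)$ coincides with that orthogonal group, with no spurious identifications arising from the passage through $\wedge^2 V$. Getting the index, the orientation convention, and the discriminant forms of the signature-$(3,2)$ lattice isomorphism $\wedge^2_0 V\cong U^{\oplus 2}\oplus\langle 2\rangle$ to align exactly is where the genuine care is needed.
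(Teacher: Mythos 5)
Your proposal is correct and takes essentially the same route as the paper: the paper likewise quotients in stages, using the short exact sequence and Lemma \ref{Tube domain} to reduce to $\Aut_\CY^+H^*(A)\backslash\mathfrak{H}_2$, identifies $\Aut_\CY^+H^*(A)\cong O^+(U^{\oplus 2}\oplus\langle 2\rangle)$, and then cites Gritsenko--Nikulin \cite[Lemma 1.1]{GN} for the equivariant identification with the standard $\mathrm{Sp}(4,\Z)$-action on $\mathfrak{H}_2$. The only difference is that you propose to reprove that cited lemma via the integral $\wedge^2$ of the standard representation of $\mathrm{Sp}(4,\Z)$ --- which is exactly how the Gritsenko--Nikulin result is established --- so the arithmetic matching you flag as the main obstacle is precisely what the paper's citation supplies.
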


\begin{proof}
From the previous discussions, we have
$$
\overline{\Aut}_\CY(\DD_A)\backslash\Stab^+_\NN(\DD_A)/\C^\times\cong
\Aut_\CY^+H^*(A)\backslash \mathfrak{H}_2.
$$
The action of $\Aut_\CY^+H^*(A)$ on $\mathfrak{H}_2$ is purely lattice theoretic. 
As an abstract group $\Aut_\CY^+H^*(A) \cong O^+(U^{\oplus 2} \oplus \langle 2 \rangle)$. 
By a fundamental result \cite[Lemma 1.1]{GN} of Gritsenko and Nikulin, it can be identified with the standard $\mathrm{Sp}(4,\Z)$-action on the Siegel upper-half space $\mathfrak{H}_2$.
\end{proof}

\begin{Rem}
It is shown in \cite{KL} that $A_g$ is mirror symmetric to a principally polarized abelian surface of dimension $g$. 
Theorem \ref{Stab-Siegel} is thereby compatible with the fact 
that the Siegel modular variety $\mathrm{Sp}(2g,\Z) \backslash \mathfrak{H}_g$ is precisely the complex moduli space of principally polarized abelian surfaces of dimension $g$.  
For $g>2$, we expect that $\mathrm{Sp}(2g,\Z) \backslash \mathfrak{H}_g$ is covered 
by a similar double quotient of a suitable subset of $\Stab(\DD_{A_g})$.
\end{Rem}

There exists a canonical metric on the Siegel modular variety $\mathrm{Sp}(4,\Z) \backslash \mathfrak{H}_2$, namely the Bergman metric. 
It is known to be a complete K\"ahler--Einstein metric. 
The main theorem of this section is to show that the Bergman metric coincides with the Weil--Petersson metric defined by Definition \ref{WPpotential}.

\begin{Prop}[\cite{Sch}]
The Bergman kernel $K_{\Ber}:\mathfrak{H}_g \times \mathfrak{H}_g \rightarrow \C$ of the Siegel upper half-space $\mathfrak{H}_g$ of degree $g$ is given by
$$
K_{\Ber}(M,N)=-\tr(\log(-\sqrt{-1}(M-\overline{N}))). 
$$
The Bergman metric is defined to be the complex Hessian of the Bergman potential
$$
K_{\Ber}(M):=K_{\Ber}(M,M)=-\tr(\log(2\Im(M))). 
$$
\end{Prop}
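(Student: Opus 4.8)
The plan is to pin down the Bergman kernel of $\mathfrak{H}_g$ up to a multiplicative constant by exploiting the transitive holomorphic action of $\mathrm{Sp}(2g,\R)$, and then to read off the potential and the metric. Recall that for a domain $D\subset\C^N$ the Bergman kernel is the reproducing kernel $K_D(z,w)=\sum_i\phi_i(z)\overline{\phi_i(w)}$ of the space of $L^2$ holomorphic functions, and that under a biholomorphism $\Phi$ of $D$ it transforms by
$$
K_D(\Phi(z),\Phi(w))\,J_\Phi(z)\,\overline{J_\Phi(w)}=K_D(z,w),
$$
with $J_\Phi$ the complex Jacobian determinant. Since $\mathrm{Sp}(2g,\R)$ acts transitively on $\mathfrak{H}_g$ by $\gamma\cdot M=(AM+B)(CM+D)^{-1}$ for $\gamma=\begin{pmatrix}A&B\\ C&D\end{pmatrix}$, the idea is to produce a kernel with the matching automorphy factor and then fix it up to a scalar by transitivity.

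First I would compute the automorphy factor. Differentiating $M\mapsto\gamma\cdot M$ and using the symplectic relations ${}^tAC={}^tCA$, ${}^tBD={}^tDB$, ${}^tAD-{}^tCB=I$, one obtains the complex Jacobian determinant $J_\gamma(M)=\det(CM+D)^{-(g+1)}$; for $g=1$ this is the familiar $(c\tau+d)^{-2}$. Next I would establish the cocycle identity
$$
\gamma\cdot M-\overline{\gamma\cdot N}={}^t(CM+D)^{-1}(M-\overline N)(C\overline N+D)^{-1},
$$
again a direct consequence of the symplectic relations (one checks it for $g=1$ by hand, and the matrix version is identical, the transposes being forced by noncommutativity). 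Taking determinants and using $\det(C\overline N+D)=\overline{\det(CN+D)}$ gives
$$
\det\!\big(\gamma\cdot M-\overline{\gamma\cdot N}\big)=\det(CM+D)^{-1}\,\overline{\det(CN+D)}^{-1}\,\det(M-\overline N),
$$
so the candidate $\det(-\sqrt{-1}(M-\overline N))^{-(g+1)}$ transforms with exactly the automorphy factor $\det(CM+D)^{g+1}\,\overline{\det(CN+D)}^{g+1}$ demanded by the transformation law above.

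It follows that the ratio $f(M,N):=K_{\mathfrak{H}_g}(M,N)\,\det(-\sqrt{-1}(M-\overline N))^{g+1}$ is $\mathrm{Sp}(2g,\R)$-invariant. Transitivity then forces $f$ to be constant on the anti-diagonal $\{N=M\}$, and since $f$ is holomorphic in $M$ and antiholomorphic in $N$, being constant on this maximal totally real locus forces $f\equiv c$ for a single constant $c>0$. Hence $K_{\mathfrak{H}_g}(M,N)=c\,\det(-\sqrt{-1}(M-\overline N))^{-(g+1)}$. To determine $c$ one evaluates at the base point $M=N=\sqrt{-1}I$, most cleanly after transporting to the bounded Siegel disc by the Cayley transform $M\mapsto(M-\sqrt{-1}I)(M+\sqrt{-1}I)^{-1}$ and invoking Hua's kernel computation on $\{I-Z\overline Z>0\}$; however the precise value of $c$ will be immaterial for us.

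Finally I would extract the potential and metric. Taking logarithms and using $\log\det=\tr\log$,
$$
\log K_{\mathfrak{H}_g}(M,N)=\log c-(g+1)\,\tr\log\!\big(-\sqrt{-1}(M-\overline N)\big).
$$
The Bergman metric is the complex Hessian $\frac{\sqrt{-1}}{2}\partial\overline\partial\log K_{\mathfrak{H}_g}(M,M)$, which kills the additive constant $\log c$ and is merely rescaled by the positive factor $(g+1)$; absorbing these normalizations yields precisely the potential $K_{\Ber}(M)=-\tr\log(2\Im(M))$, where I have used $-\sqrt{-1}(M-\overline M)=2\Im(M)$ on the diagonal. The genuinely computational steps are the Jacobian determinant and the cocycle identity, and I expect verifying these matrix identities from the symplectic relations to be the main obstacle; the evaluation of $c$ is standard and, entering only as an additive constant in the potential, does not affect the metric.
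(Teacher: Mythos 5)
The paper offers no proof of this proposition at all --- it is quoted from Schumacher [Sch] as a known fact --- so your argument supplies what the paper omits, and it is correct in substance: it is the standard derivation of the Bergman kernel of a bounded symmetric domain from the transformation law $K(\Phi(z),\Phi(w))J_\Phi(z)\overline{J_\Phi(w)}=K(z,w)$, the Jacobian $J_\gamma(M)=\det(CM+D)^{-(g+1)}$ (correct: $dM\mapsto{}^t(CM+D)^{-1}\,dM\,(CM+D)^{-1}$ and $\det$ of $S\mapsto{}^tPSP$ on symmetric matrices is $\det(P)^{g+1}$), the cocycle identity $\gamma\cdot M-\overline{\gamma\cdot N}={}^t(CM+D)^{-1}(M-\overline{N})(C\overline{N}+D)^{-1}$ (which follows from $\gamma\cdot M-\gamma\cdot N={}^t(CM+D)^{-1}(M-N)(CN+D)^{-1}$ and realness of $\gamma$), and the polarization argument that a sesquiholomorphic $\mathrm{Sp}(2g,\R)$-invariant ratio constant on the totally real diagonal $\{N=M\}$ is constant. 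Two small points deserve explicit mention: $-\sqrt{-1}(M-\overline{N})$ has positive definite real part $\Im(M)+\Im(N)$, so it is invertible with spectrum in the right half-plane, which is what legitimizes both the global nonvanishing of your candidate kernel and the principal branch in $\tr\log=\log\det$; and one should note that the Bergman space is nontrivial (e.g.\ via the Cayley transform to the bounded Siegel disc, which you invoke anyway for the constant $c$), since for unbounded domains the kernel could a priori vanish identically.

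The one place where you should be more explicit is the normalization. What you actually prove is that the genuine Bergman kernel is $c\,\det(-\sqrt{-1}(M-\overline{N}))^{-(g+1)}$, so the displayed formula in the proposition is not the kernel but $-\tfrac{1}{g+1}\bigl(\log K-\log c\bigr)$, i.e.\ a normalized potential; correspondingly the canonical Bergman metric $\frac{\sqrt{-1}}{2}\partial\overline{\partial}\log K(M,M)$ is $(g+1)$ times the complex Hessian of $K_{\Ber}(M)=-\tr\log(2\Im(M))$. Your phrase ``absorbing these normalizations'' is doing real work here: the paper's convention (following [Sch]) simply takes the Hessian of the normalized potential, and since a positive constant multiple does not affect K\"ahlerianity or the identification of potentials up to additive constants used later in the paper, this is harmless --- but stating the rescaling factor $(g+1)$ explicitly, as you do, is exactly the right way to reconcile your derivation with the proposition as worded.
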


\begin{Thm}
The Weil--Petersson potential on $\Stab^+_\NN(\DD_A)/\C$ coincides with the Bergman potential of the Siegel upper half-plane $\mathfrak{H}_2$ up to a constant.
\end{Thm}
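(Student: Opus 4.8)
The plan is to reduce both potentials to explicit functions of the period matrix $M = M_\omega \in \mathfrak{H}_2$ furnished by Proposition \ref{identify}, and then to observe that they differ only by an additive constant. Since the Weil--Petersson metric and the Bergman metric are the complex Hessians of their respective potentials, a constant discrepancy is annihilated by $\frac{\sqrt{-1}}{2}\partial\overline{\partial}$, so the coincidence of the two metrics will follow immediately.

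First I would evaluate the Weil--Petersson side. Choosing the local holomorphic section $\omega \mapsto \mho_\omega = \exp(\omega)$ of the $\C$-torsor $\Stab^+_\NN(\DD_A)\rightarrow\Stab^+_\NN(\DD_A)/\C$, Proposition \ref{WP_B}(2) with $n=2$ gives
$$
K_{\WP}(\omega) = -\log(\Im(\omega)^2) - \log 2.
$$
The essential input is then the identity $\Im(\omega)^2 = \det(\Im(M_\omega))$ recorded in the proof of Proposition \ref{identify}, which allows me to rewrite the potential as $K_{\WP} = -\log\det(\Im(M)) - \log 2$, now a function of $M$ alone.

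Next I would treat the Bergman side. Using the elementary identity $\tr(\log P) = \log\det(P)$, valid for any positive definite symmetric matrix $P$, applied to $P = 2\Im(M)$, the Bergman potential becomes
$$
K_{\Ber}(M) = -\tr(\log(2\Im(M))) = -\log\det(2\Im(M)) = -\log\det(\Im(M)) - 2\log 2,
$$
where the last equality uses $\det(2\Im(M)) = 2^2\det(\Im(M))$ for a $2\times 2$ matrix. Comparing the two expressions shows that $K_{\WP}$ and $K_{\Ber}$ differ precisely by the constant $\log 2$, which proves the claim.

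I do not expect a serious obstacle here: once the framework of Section \ref{WPStab} together with the identifications of Propositions \ref{identify} and \ref{WP_B} are in place, the statement amounts to matching two logarithmic expressions. The only point requiring care is to confirm that the section $\omega \mapsto \exp(\omega)$ is compatible with the tube-domain coordinate $\omega \mapsto M_\omega$ used to identify $\RR^+(A)/\C^\times$ with $\mathfrak{H}_2$ in Lemma \ref{Tube domain}, so that the variable $M$ appearing in the two potentials is genuinely the same; this compatibility is built into the constructions and should present no difficulty. Moreover, since only the Hessian is intrinsic, any normalization constant implicit in the identification $\Im(\omega)^2 = \det(\Im(M))$ is harmless, as it contributes only a further additive constant to $K_{\WP}$.
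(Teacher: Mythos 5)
Your proposal is correct and follows essentially the same route as the paper: invoke Proposition \ref{WPdomain} (implicitly, via the section $\omega \mapsto \exp(\omega)$) to reduce to the computation of Proposition \ref{WP_B} with $n=2$, use the identity $\Im(\omega)^2 = \det(\Im(M_\omega))$ from Proposition \ref{identify}, and compare with the Bergman potential via $\tr\log = \log\det$, obtaining the same constant discrepancy $\log 2$. Your closing remarks on section-independence of the Hessian and compatibility with the tube-domain coordinate of Lemma \ref{Tube domain} are sound and, if anything, slightly more careful than the paper's own write-up.
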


\begin{proof}
By Proposition \ref{WPdomain}, the central charge of a stability condition in $\Stab^+_\NN(\DD_A)$ is of the form $\ZZ(E)=-c\langle\mho_\omega,v_A(E)\rangle$ for some complexified K\"ahler class $\omega$.
So we can apply the calculation of the Weil--Petersson potential in Proposition \ref{WP_B}.

The key idea is to use the identification of $\omega$ and $M_\omega$ provided in Proposition \ref{identify}. 
Then the two K\"ahler potentials are related as follows: 
\begin{align}
K_{\WP}(\omega) &= - \log(\Im(\omega)^2) - \log 2 \notag \\
& = - \log(\det (\Im(M_\omega)))-\log 2 \notag \\
&=- \tr(\log(2 \Im(M_\omega)))+\log 2 \notag \\
&= K_{\Ber}(M_\omega)+\log 2. \notag  
\end{align}
This completes the proof. 
\end{proof}
 
\begin{Cor} \label{WP=Bergman}
The Weil--Petersson metric on the stringy K\"ahler moduli space is identified with the Bergman metric on $\mathrm{Sp}(4,\Z) \backslash \mathfrak{H}_2$ via the isomorphism in Theorem \ref{Stab-Siegel}.
\end{Cor}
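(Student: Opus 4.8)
The plan is to obtain this corollary as a formal consequence of the preceding theorem, which already identifies the two K\"ahler potentials up to an additive constant. Both metrics under comparison are Hessian metrics by definition: the Weil--Petersson metric is $\frac{\sqrt{-1}}{2}\partial\bar\partial K_{\WP}$, while the Bergman metric is the complex Hessian of $K_{\Ber}$. First I would invoke the identification $\omega \mapsto M_\omega$ of Proposition \ref{identify} together with the tube domain realization $\mathfrak{H}_2 \cong \RR^+(A)/\C^\times$ of Lemma \ref{Tube domain}, under which the preceding theorem reads $K_{\WP}(\omega) = K_{\Ber}(M_\omega) + \log 2$. Since $\partial\bar\partial$ annihilates the constant $\log 2$, the two complex Hessians agree, and hence the Weil--Petersson and Bergman metrics coincide pointwise on $\Stab^+_\NN(\DD_A)/\C^\times \cong \mathfrak{H}_2$.

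Next I would verify that this local equality descends to the double quotient of Theorem \ref{Stab-Siegel}. On the stability side, the proposition following Definition \ref{WPpotential} shows that the Weil--Petersson Hessian descends along the $\overline{\Aut}_\CY(\DD_A)$-action, since autoequivalences preserve the Euler pairing and therefore the bilinear form $\mathfrak{b}$. On the Siegel side, the Bergman metric is the canonical invariant metric on $\mathfrak{H}_2$ and is thus preserved by the full $\mathrm{Sp}(4,\Z)$-action. The key step is then to observe that Theorem \ref{Stab-Siegel} identifies these two group actions via the Gritsenko--Nikulin isomorphism $O^+(U^{\oplus 2}\oplus\langle 2\rangle) \cong \mathrm{Sp}(4,\Z)$, so that the pointwise equality of metrics on $\mathfrak{H}_2$ is equivariant and passes to the quotient $\mathrm{Sp}(4,\Z)\backslash\mathfrak{H}_2$.

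I expect the substantive computation to have been settled already in the preceding theorem, so the only genuine point requiring care here is bookkeeping: reconciling the quotient by the free $\C \subset \widetilde{\GL^+(2,\R)}$ used to define $K_{\WP}$ with the $\C^\times$-scaling $\mho \mapsto c\mho$ underlying the tube domain picture (this is precisely the content of Lemma \ref{Tube domain}), and confirming that the equivariance of the two metrics is taken with respect to the same identification of groups supplied by Theorem \ref{Stab-Siegel}. These compatibilities are lattice-theoretic and do not involve the differential-geometric content, so granting them the corollary follows immediately.
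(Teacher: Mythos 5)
Your proposal is correct and matches the paper's (implicit) argument: the paper gives no separate proof of this corollary, treating it as an immediate consequence of the preceding theorem identifying $K_{\WP}$ with $K_{\Ber}$ up to an additive constant, combined with the isomorphism of double quotients in Theorem \ref{Stab-Siegel}. Your additional care about equivariance and the $\C$ versus $\C^\times$ bookkeeping fills in exactly the details the paper leaves tacit, with no divergence in method.
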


\subsection{Split abelian surfaces}
Now let  $A$ be a split abelian surface,
that is, $A\cong E_{\tau_1}\times E_{\tau_2}$ for elliptic curves $E_{\tau_1}$ and $E_{\tau_2}$. 
Such a splitting is unique provided that $E_{\tau_1}$ and $E_{\tau_2}$ are generic, or equivalently $\NS(A)\cong U$. 

Discussions in the previous section carries over for the split abelian surface $A$. 
The set of complexified K\"ahler forms is identified with $\HH \times \HH$, which is diagonally embedded in $\mathfrak{H}_2$ (c.f. Proposition \ref{identify}). 
It is precisely the symmetric domain of type $\mathrm{IV}_2$ associated to $U^{\oplus 2}$. 
On the other hand, it is known (c.f. \cite[Proposition 2.6]{HOLY}) that 
$$
{\Aut}^+_\CY H^*(A)  \cong O^+(U^{\oplus 2})  \cong \mathrm{P}(\mathrm{SL}(2,\Z)\times \mathrm{SL}(2,\Z)) \rtimes \Z_2, 
$$
where $\mathrm{P}(\mathrm{SL}(2,\Z)\times \mathrm{SL}(2,\Z))$ represents the quotient group of $\mathrm{SL}(2,\Z)\times \mathrm{SL}(2,\Z)$ by the involution $(A,B) \mapsto (-A,-B)$ 
and the semi-direct product structure is given by the generator of $\Z_2$ acting on $\mathrm{SL}(2,\Z)\times \mathrm{SL}(2,\Z)$ by exchanging the two factors. 

\begin{Thm}
There is an identification 
$$
\overline{\Aut}_\CY(\DD_A)\backslash\Stab^+_\NN(\DD_A)/\C^\times\cong
\mathrm{P}(\mathrm{SL}(2,\Z)\times \mathrm{SL}(2,\Z)) \rtimes \Z_2 \backslash (\HH \times \HH)
$$
Moreover, the Weil--Petersson metric on the stringy K\"ahler moduli space $\overline{\Aut}_\CY(\DD_A)\backslash\Stab^+_\NN(\DD_A)/\C^\times$ is identified with
the Bergman metric on the Siegel modular variety $\mathrm{P}(\mathrm{SL}(2,\Z)\times \mathrm{SL}(2,\Z)) \rtimes \Z_2 \backslash (\HH \times \HH)$. 
\end{Thm}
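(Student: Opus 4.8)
The plan is to run the argument of Theorem \ref{Stab-Siegel} and Corollary \ref{WP=Bergman} essentially verbatim, replacing the lattice $U^{\oplus 2}\oplus\langle 2\rangle$ of the self-product by the lattice $U^{\oplus 2}$ governing the generic split surface. First I would record the lattice computation: since $E_{\tau_1}$ and $E_{\tau_2}$ are generic there are no nontrivial correspondences between them, so $\NS(A)\cong U$ and hence $\NN(\DD_A)=H^0(A;\Z)\oplus\NS(A)\oplus H^4(A;\Z)\cong U^{\oplus 2}$, now of signature $(2,2)$ rather than $(3,2)$. All of the lattice-independent inputs from the preceding subsection — Theorem \ref{BriK3} on the global structure of $\Stab^\dagger(A)$, Proposition \ref{WPdomain} characterizing $\Stab^+_\NN(A)$, and Proposition \ref{WP_B} computing $\mathfrak{b}$ and $K_{\WP}$ — hold for an arbitrary abelian surface, and so carry over unchanged.

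For the first identification I would assemble these exactly as in Lemma \ref{Tube domain} and Theorem \ref{Stab-Siegel}. Defining $\RR^+(A)\subset\PP^+(A)$ by the same conditions $\langle\mho,\mho\rangle_{\Muk}=0$ and $-\langle\mho,\overline{\mho}\rangle_{\Muk}>0$, the quotient $\RR^+(A)/\C^\times$ is the symmetric domain of type $\mathrm{IV}_2$ attached to $U^{\oplus 2}$, which via the accidental isomorphism $\mathrm{IV}_2\cong\HH\times\HH$ (the signature-$(2,2)$ analogue of the $\mathrm{IV}_3\cong\mathrm{III}_2$ used before) is realized as $\HH\times\HH$ through the tube-domain map $\omega\mapsto[\mho_\omega]$. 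By Theorem \ref{BriK3}(1) the forgetful map $\Stab^+_\NN(\DD_A)\to\RR^+(A)$ is a covering with deck group $\langle[2]\rangle$, so $\langle[2]\rangle\backslash\Stab^+_\NN(\DD_A)/\C^\times\cong\HH\times\HH$. Quotienting by the short exact sequence $1\to\langle[2]\rangle\to\overline{\Aut}_\CY(\DD_A)\to\Aut_\CY^+H^*(A)\to1$ and invoking the identification $\Aut_\CY^+H^*(A)\cong O^+(U^{\oplus 2})\cong\mathrm{P}(\mathrm{SL}(2,\Z)\times\mathrm{SL}(2,\Z))\rtimes\Z_2$ quoted above then yields the asserted double-quotient description.

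For the metric I would specialize the computation in the proof of Theorem \ref{Stab-Siegel}. By Proposition \ref{WPdomain} every stability condition in $\Stab^+_\NN(\DD_A)$ has central charge $\ZZ_{\mho_\omega}$ for a complexified K\"ahler class $\omega$, and since $A$ is generically split we have $\sigma=0$, so the matrix $M_\omega$ (cf. Proposition \ref{identify}) is the diagonal $\mathrm{diag}(\rho,\tau)$ with $(\rho,\tau)\in\HH\times\HH$ — this is precisely the diagonal embedding $\HH\times\HH\hookrightarrow\mathfrak{H}_2$. Proposition \ref{WP_B}(2) then gives
$$
K_{\WP}(\omega)=-\log(\Im(\omega)^2)-\log 2=-\log\big(\Im(\rho)\Im(\tau)\big)-\log 2,
$$
which, up to the additive constant $\log 2$, equals $K_{\Ber}(M_\omega)=-\tr(\log(2\Im(M_\omega)))$ evaluated on the diagonal, i.e. the sum of the two Poincar\'e potentials on the factors. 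Taking $\frac{\sqrt{-1}}{2}\partial\overline{\partial}$ kills the constant, so the Weil--Petersson metric is the product of the Poincar\'e metrics, which is exactly the Bergman metric on $\HH\times\HH$; descending along the quotient gives the second assertion.

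The main obstacle I anticipate is structural rather than computational: unlike the irreducible domain $\mathrm{IV}_3$ in the self-product case, the type $\mathrm{IV}_2$ domain is reducible, so one must check that the accidental isomorphism $O^+(U^{\oplus 2})\cong\mathrm{P}(\mathrm{SL}(2,\Z)\times\mathrm{SL}(2,\Z))\rtimes\Z_2$ matches the $\Z_2$ factor (the Hodge isometry exchanging the two elliptic curves) with the swap of the two $\HH$-factors, and that this swap is genuinely realized by a Calabi--Yau autoequivalence lifting to $\Stab^+_\NN(\DD_A)$. This compatibility is precisely what the cited computation of $\Aut_\CY^+H^*(A)$ in \cite{HOLY} supplies, and once it is in hand the argument closes as above.
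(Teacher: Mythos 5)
Your proposal is correct and follows exactly the route the paper intends: the paper's own ``proof'' of this theorem is just the remark that the discussion of the self-product case carries over, with $\NS(A)\cong U$, the complexified K\"ahler cone identified with the type $\mathrm{IV}_2$ domain $\HH\times\HH$ diagonally embedded in $\mathfrak{H}_2$, and the group identification $\Aut^+_\CY H^*(A)\cong O^+(U^{\oplus 2})\cong \mathrm{P}(\mathrm{SL}(2,\Z)\times\mathrm{SL}(2,\Z))\rtimes\Z_2$ quoted from \cite{HOLY}. Your write-up simply makes explicit the steps the paper leaves implicit (including the potential computation $K_{\WP}=-\log(\Im(\rho)\Im(\tau))-\log 2=K_{\Ber}+\log 2$ on the diagonal, which is consistent with the self-product calculation), and your flagged concern about the $\Z_2$ swap factor is indeed exactly what the citation to \cite{HOLY} is meant to supply.
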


This observation is compatible with self-mirror symmetry for the split abelian surfaces. 
In fact a lattice polarized version of the global Torelli Theorem asserts that the complex moduli space of split abelian surfaces are given by the above Siegel modular variety.

\begin{Rem}
A similar computation can be carried out for $M$-polarized K3 surfaces for the lattice $M \cong U^{\oplus2} \oplus \langle -2 \rangle$ or $U^{\oplus2}$.  
The main difference is that there are spherical objects in the derived category $\DD_X$ of a K3 surface $X$ and we need to remove the union of certain hyperplanes from $\PP^+$. 
Moreover, the subgroup of $\Aut^0(\DD_X)$ which preserves the connected component $\Stab^\dagger(\DD_X)$ acts freely on $\Stab^\dagger(\DD_X)$.
So one does not need to take the quotient of the group of Calabi--Yau autoequivalences by $\Aut_{\mathrm{tri}}^0(\DD)$ as in the abelian surface case (c.f. \cite{Bri2}).
\end{Rem}

%

\subsection{Abelian variety}
Let $X$ be an abelian variety of dimension $n$. 
Since there is no quantum corrections and the Chern classes are trivial, 
the expected central charge at the complexified K\"ahler moduli $\omega \in H^2(X;\C)$ is given by 
$$
\ZZ_{\mho_{\omega}}(E)=-\langle\mho_{\omega}, v_X(E)\rangle_{\Muk}=-\int_X e^{- \omega}\ch(E).
$$
The existence of Bridgeland stability condition with this central charge is known for $n\le 3$. 
By Proposition \ref{WP_B}, the Weil--Petersson potential is
$$
K_{\WP}(\tau)=-\log(\Im(\omega)^n) - \log \frac{2^n}{n!}.
$$

Fix a polarization $H$. We think of $\omega=\tau H$ for $\tau \in \HH$ as a slice of the stringy K\"ahler moduli space $\mathcal{M}_{\mathrm{Kah}}(X)$.  
Then the Weil--Petersson metric on $\HH$ is essentially the Poincar\'e metric.  
This example is a toy model in the sense that there is no quantum correction. 

The above observation is compatible with Wang's {\it mirror} result \cite[Remark 1.3]{Wang}, 
which says that in the case of infinite distance,
the Weil--Petersson metric is asymptotic to a scaling of the Poincar\'e metric.

\subsection{Quintic threefold}
Although the existence of a Bridgeland stability condition for a quintic threefold $X \subset \mathbb{P}^4$ has not yet been proven, 
we can still compute the Weil--Petersson potential using the central charge in Equation (\ref{Central charge}) near the large volume limit. 

Let $\tau H \in H^2(X;\C)$ be the complexified K\"ahler class, where $H$ is the hyperplane class and $\tau \in \HH$. 
First we observe that 
$$
\exp_*(\tau H)=1+\tau H+\frac{\tau^2}{2}(1+\frac{1}{5}\sum_{d\geq1}N_d d^3 q^d)H^2
+\frac{\tau^3}{6}(1+\frac{1}{5}\sum_{d\geq1}N_d d^3 q^d)H^3, 
$$
where $q=e^{2\pi  \sqrt{-1} \tau}$ and $N_d^X$ denotes the genus 0 Gromov--Witten invariant of $X$ of degree $d$, 
and we use the quantum product 
$$
H*H=\Phi(q)H^2=\frac{1}{5}(5+\sum_{d\geq1}N_d^Xq^d d^3)H^2. 
$$
Then the central charge computes to be 
\begin{align}
\ZZ(E)&=- \left \langle \exp_*( \tau H  ), v_X(E) \right\rangle_{\Muk} \notag \\
&=-\int_X e^{-\tau H}v_X(E)
+\frac{\zeta(3) \chi(X)}{(2\pi)^3}(\frac{\tau^2}{10}H^2\ch_1(E)-\frac{\tau^3}{6}\ch_0(E)) \sum_{d\geq1}N^X_d d^3 q^d, \notag 
\end{align}
where $\chi(X)$ is the topological Euler number of $X$. 
Near the large volume limit, the Weil--Petersson potential is given by
\begin{align}
K_{\WP}(\tau) &= -\log\Big(H^3(\overline{\Phi(q)}(\frac{\bar{\tau}^3}{6}+\frac{\tau \bar{\tau}^2}{2})-\Phi(q)(\frac{{\tau^3}}{6}+\frac{\tau^2 \bar{\tau}}{2}) \Big)-2\log\Big(\frac{\zeta(3) \chi(X)}{(2\pi)^3}\Big) \notag \\
& \sim -\log(\frac{4}{3}H^3 \Im(\tau)^3) -2\log\Big(\frac{\zeta(3) \chi(X)}{(2\pi)^3}\Big) +O(q). \notag
\end{align}
Therefore the Weil--Petersson metric of a quintic threefold is a quantum deformation of the Poincar\'e metric on $\HH$ as expected. 
In particular, for sufficiently small $q$, it is non-degenerate and the Weil--Petersson distance to the large volume limit is infinite. 
When there is no B-field, i.e. $\tau \in \sqrt{-1} \R$, the correction term $O(q)$ is explicitly given by $\log(\Phi(q))$.

\begin{Rem}[\cite{COGP}]
The stringy K\"ahler moduli space $\mathcal{M}_{\mathrm{Kah}}(X)$ of a quintic Calabi--Yau threefold $X\subset \mathbb{P}^4$ is expected to be identified with the suborbifold
$$
[\{ z \in \C \ | \ z^5\ne1\}/\Z_5] \subset [\mathbb{P}^1/\Z_5]. 
$$ 
The point $z=\infty$ is the large volume limit, the point $z^5=1$ is the conifold point, and the point $z=0$ is the Gepner point. 
We expect the following properties of the Weil--Petersson metric on $\mathcal{M}_{\mathrm{Kah}}(X)$.
	\begin{enumerate}
	\item The Weil--Petersson distance to the conifold point, which corresponds to a quintic threefold with a conifold singularity, should be finite. 
This is based on a result of Wang \cite{Wang} on the mirror complex moduli, 
which asserts that if a Calabi--Yau variety has at worst canonical singularities, then it has finite Weil--Petersson metric along any smoothing to Calabi--Yau manifolds.
	\item The Weil--Petersson metric at the Gepner point should be an orbifold metric. 
	This is because the auto-equivalence 
	$$
	\Phi(-)=\mathrm{ST}_{\mathcal{O}_X} \circ \big( (-) \otimes\mathcal{O}_X(H) \big), 
	$$
	where $\mathrm{ST}_{\mathcal{O}_X}$ denotes the Seidel--Thomas spherical twist with respect to $\mathcal{O}_X$, 
	at the Gepner point satisfies the relation $\Phi^5=[2]$. 
	This descends to $\Phi^5=\mathrm{id}$ on $K(\DD_X)$.  
	On other hand, the calculations of Candelas--de la Ossa--Green--Parkes \cite{COGP} shows that the Weil--Petersson curvature tends to $+\infty$ as we approach the Gepner point. 
	\end{enumerate}
\end{Rem}

It is interesting to investigate the interplay among the geometry of a Calabi--Yau threefold $X$, the cubic intersection form on $H^{2}(X;\Z)$, 
and curvature properties of the Weil--Petersson metric near a large volume limit \cite{TW, Wan, Wil2}. 

On the other hand, probably a more alluring research direction is to examine the Weil--Petersson metric away from a large volume limit, 
where central charges are not of the form (\ref{Central charge}), as the metric is inherently global. 
For instance, the Weil--Petersson metric around a Gepner point may be studied via matrix factorization categories via the Orlov equivalence \cite{Orl3}
$$
\mathrm{D^bCoh}(X) \cong \mathrm{HMF}(W),
$$
where $\mathrm{HMF}(W)$ is the homotopy category of a graded matrix factorization of the defining equation $W$ of the quintic 3-fold $X$.  
Toda studied stability conditions, called the Gepner type stability conditions, conjecturally corresponding to the Gepner point \cite{Toda}. 


\par\noindent{\scshape \small
Department of Mathematics, Harvard University\\
One Oxford street, Cambridge, MA 02138, USA}
\par\noindent{\ttfamily ywfan@math.harvard.edu}
\par\noindent{\ttfamily yau@math.harvard.edu} 
\ \\
\par\noindent{\scshape \small
Department of Mathematics, Kyoto University\\
Kitashirakawa-Oiwake, Sakyo, Kyoto, 606-8502, Japan}
\par\noindent{\ttfamily  akanazawa@math.kyoto-u.ac.jp}

\end{document}